\renewcommand*{\backref}[1]{}
\renewcommand*{\backrefalt}[4]{%
	\ifcase #1 (Not cited).%
	\or        (Cited on page~#2).%
	\else      (Cited on pages~#2).%
	\fi}
\newcommand{\R}{\mathbb{R}}
\newcommand{\C}{\mathbb{C}}
\renewcommand{\H}{\mathbb{H}}
\newcommand{\Ker}{\mathrm{Ker}}
\renewcommand{\epsilon}{\varepsilon}
\renewcommand{\phi}{\varphi}
\theoremstyle{plain}
\newtheorem{thm}{Theorem}[section]
\newtheorem{prop}[thm]{Proposition}
\newtheorem{lem}[thm]{Lemma}
\newtheorem{cor}[thm]{Corollary}
\theoremstyle{definition}
\newtheorem{rmk}[thm]{Remark}
\newtheorem{ex}[thm]{Example}
\title{On balanced HKT manifolds}
\begin{document}
 

\author{Giovanni Gentili}
\address{(Giovanni Gentili)  Dipartimento di Matematica G. Peano, Universit\`a di Torino, Via Carlo Alberto 10, 10123, Torino, Italy.}
\email{giovanni.gentili@unito.it}

\author{Mehdi Lejmi}
\address{(Mehdi Lejmi) Department of Mathematics, Bronx Community College of CUNY, Bronx, NY 10453, USA.}
\email{mehdi.lejmi@bcc.cuny.edu}


\begin{abstract}
We prove the openness of the balanced HKT cone within the cone of HKT structures on a compact hypercomplex manifold $(M,I,J,K)$. We also study the Lie algebra of hyperholomorphic vector fields of type $(1,0)$ with respect to $I$, with particular emphasis on the case when there exists a compatible balanced HKT metric. These fields exhibit a strict interplay with the balanced HKT structure, for instance, we prove a harmonicity property for $(1,0)$-forms dual to hyperholomorphic vector fields. We also show non-existence of hyperholomorphic $(1,0)$-vector fields on some hypercomplex manifolds admitting a HKT--Einstein metric.
\end{abstract}

\maketitle

\section{Introduction}

Hyperk\"ahler with torsion (HKT) structures have been introduced in the physical literature by Howe and Papadopoulos \cite{HP}. They play a significant role in certain supersymmetric sigma models and other physical theories. These structures belong to the realm of hyperhermitian geometry and represent an interesting weakening of the hyperk\"ahler condition. We refer to Section \ref{Sec:pre} for essential definitions.

The attention devoted to HKT and hypercomplex structures has recently increased: among others, we mention the very recent work \cite{AB1,AB2,AGT,AT,BFG,BFGV,FG,GTar,LT,Pap1,Pap2,Witten}. Part of this renewed interest of the mathematical community is due to the influential quaternionic Calabi conjecture formulated by Alesker and Verbitsky \cite{AV} in 2010. On a HKT manifold $(M,I,J,K,g,\Omega)$, the form $\Omega\in \Lambda^{2,0}_I(M)$ induces a nowhere vanishing section of the canonical bundle $\Omega^n\in \Lambda^{2n,0}_I(M)$, where $2n=\dim_\C(M)$. The expectation is that on a compact HKT manifold, any q-positive holomorphic section of the canonical bundle arises in this way, namely as the top wedge power of another HKT metric. The new HKT metric, which would be the ``quaternionic version'' of a Calabi--Yau metric in K\"ahler geometry, is then also balanced and (first) Chern--Ricci flat. The conjecture has only been fully proved with the additional assumption of having a hyperk\"ahler metric by Dinew and Sroka \cite{DS} (see also \cite{BGV}), which unfortunately excludes from the picture all the interesting geometric scenarios in the non-K\"ahler framework.

\medskip

Our first main result, motivated by the conjecture of Alesker and Verbitsky, concerns the cone of balanced HKT cohomology classes. A HKT metric $\Omega$ on a hypercomplex manifold $(M,I,J,K)$ determines a quaternionic Bott--Chern cohomology class in $H^{2,0}_{\mathrm{BC}}(M)$. All such classes admitting a HKT metric as a representative form an open cone inside $H^{2,0}_{\mathrm{BC}}(M)$. Also, we can consider the smaller cone of cohomology classes that admit a balanced HKT representative. Should the conjecture of Alesker and Verbitsky turn out to be true, it would imply that on compact HKT manifolds with holomorphically trivial canonical bundle the balanced HKT cone coincides with the HKT cone. We prove, analogously to the classical result of LeBrun and Simanca \cite{LBS} regarding extremal K\"ahler metrics, that a given balanced HKT class has an open neighborhood in $H^{2,0}_{\mathrm{BC}}(M)$ such that each element of this neighborhood is represented by a balanced HKT metric:

\begin{thm}\label{Thm:main1}
Let $(M,I,J,K)$ be a compact hypercomplex manifold. Then the balanced HKT cone is open in the HKT cone.
\end{thm}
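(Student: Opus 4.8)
The plan is to run an implicit function theorem argument modelled on LeBrun--Simanca's proof that the extremal K\"ahler cone is open \cite{LBS}. Fix a balanced HKT metric $\Omega_0$ representing a class $\mathfrak{a}_0$ in the HKT cone inside $H^{2,0}_{\mathrm{BC}}(M)$. Since $M$ is compact, $H^{2,0}_{\mathrm{BC}}(M)$ is finite dimensional and each class carries a canonical representative depending smoothly on the class (Hodge theory for the elliptic quaternionic Bott--Chern Laplacian, see Section~\ref{Sec:pre}); shifting these representatives by one fixed $\partial\partial_J$-potential produces a smooth family of forms $\Omega_{\mathfrak{a}}$ with $\Omega_{\mathfrak{a}_0}=\Omega_0$, and by openness of the HKT cone together with the openness of $q$-positivity each $\Omega_{\mathfrak{a}}$ is a HKT metric for $\mathfrak{a}$ in a neighbourhood $U$ of $\mathfrak{a}_0$. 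Because HKT forms are $q$-closed, every HKT metric in the class $\mathfrak{a}$ equals $\Omega_{\mathfrak{a}}+\partial\partial_J f$ for a real function $f$ (this is, up to taking real parts, the content of the definition of $H^{2,0}_{\mathrm{BC}}(M)$), unique modulo constants; I work with $f$ in the slice $C^{\infty}_0(M,\R)$ of functions with zero mean.

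Next I encode ``balanced'' as the vanishing of the natural differential operator $\mathcal{B}$ whose kernel cuts out the balanced HKT metrics among the HKT ones (the torsion/Lee $1$-form, as recalled in Section~\ref{Sec:pre}); $\mathcal{B}(\Omega)$ depends on $\Omega$ through the algebraically associated metric $g_\Omega$ and two derivatives. Define
\[
\Phi\colon U\times C^{\infty}_0(M,\R)\longrightarrow \Omega^1(M,\R),\qquad
\Phi(\mathfrak{a},f)=\mathcal{B}\big(\Omega_{\mathfrak{a}}+\partial\partial_J f\big),
\]
a smooth map between suitable H\"older completions with $\Phi(\mathfrak{a}_0,0)=0$ since $\Omega_0$ is balanced. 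By the implicit function theorem it suffices that the partial differential $\mathcal{L}:=D_f\Phi|_{(\mathfrak{a}_0,0)}$ be surjective onto the subspace in which the defect actually lies, with closed complemented kernel; then for $\mathfrak{a}$ near $\mathfrak{a}_0$ there is a small $f_{\mathfrak{a}}$ with $\Phi(\mathfrak{a},f_{\mathfrak{a}})=0$, and after shrinking $U$ so that $\Omega_{\mathfrak{a}}+\partial\partial_J f_{\mathfrak{a}}$ remains $q$-positive this is a balanced HKT metric representing $\mathfrak{a}$, which proves Theorem~\ref{Thm:main1}.

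The core computation is the analysis of $\mathcal{L}f=\tfrac{d}{dt}\big|_{t=0}\mathcal{B}(\Omega_0+t\,\partial\partial_J f)$, the composition of $f\mapsto\partial\partial_J f$ with the linearisation of $\mathcal{B}$ at $\Omega_0$. Because $\mathcal{B}(\Omega_0)=0$, the contributions of differentiating the metric coefficients inside $\mathcal{B}$ drop out, so that $\mathcal{L}$ acquires a formal symmetry; moreover its principal symbol is $\partial\partial_J$ followed by an operator injective on the relevant symbol subspace, hence injective, so that $\mathcal{L}$ is elliptic in the overdetermined sense, with finite-dimensional kernel and closed range of finite codimension. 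Its kernel consists of the real functions $f$ for which $\partial\partial_J f$ is an infinitesimal balanced deformation; this is the analogue of the holomorphy-potential space of \cite{LBS}, and it is precisely where the hyperholomorphic $(1,0)$-vector fields studied in the rest of the paper enter.

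I expect the crux, and the main obstacle, to be the surjectivity of $\mathcal{L}$ onto the space swept out by the defect $\mathcal{B}(\Omega_{\mathfrak{a}})$ as $\mathfrak{a}$ varies: equivalently, one must show that the obstruction pairing of this defect against $\ker\mathcal{L}$ — a ``balanced Futaki-type'' functional — vanishes for $\mathfrak{a}$ near $\mathfrak{a}_0$, which should follow from its being a cohomological quantity that vanishes at $\mathfrak{a}_0$, and that the defect otherwise lies in the closed range of $\mathcal{L}$. Granting these structural facts, the remainder (ellipticity, elliptic regularity, the Fredholm bookkeeping, and the final $q$-positivity shrinking) is routine; the substance is the identification of $\ker\mathcal{L}$ with hyperholomorphic data and the vanishing of the associated obstruction.
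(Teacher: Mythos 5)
Your overall strategy---an implicit function theorem argument in the spirit of \cite{LBS}, after splitting an HKT metric in a given class into a harmonic representative plus a $\partial\partial_J$-potential---is the same as the paper's, but the proposal stops short of the one computation that makes the argument close, and the heuristics you offer in its place point in the wrong direction. The paper does not linearize the Lee form; it linearizes the \emph{Chern scalar curvature} (projected onto zero-mean functions) and finds in Proposition \ref{TPsi} that the derivative in the potential direction is $h\mapsto -2\Delta_{g_0}^2h$, an isomorphism of $W^{k+4}_0$ onto $W^{k}_0$. In particular the operator you call $\mathcal{L}$ has \emph{trivial} kernel on the zero-mean slice: there is no ``holomorphy-potential space,'' no role for hyperholomorphic vector fields in this deformation problem, and no Futaki-type obstruction. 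Your identification of $\ker\mathcal{L}$ with infinitesimal balanced deformations governed by hyperholomorphic data imports the cscK picture where it does not apply; the balanced HKT problem behaves like the Calabi--Yau problem (linearization a power of the Laplacian), not like the cscK problem, and had you computed the linearization you would have seen the anticipated ``crux'' dissolve.

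The genuine gap is the surjectivity step you defer. If you take $\mathcal{B}=\theta$ valued in $1$-forms, then at a balanced $\Omega_0$ the linearization is $\dot\theta=d\,\Delta^{\mathrm{Ch}}_{\Omega_0}\dot f$ (this follows from \eqref{eq:Lee}), whose range is exactly the space of exact $1$-forms. So you must prove that the Lee form of \emph{every} nearby HKT metric is exact---not merely that some pairing vanishes at $\mathfrak{a}_0$ ``for cohomological reasons.'' This is true but is a theorem, not a formality: the existence of a balanced HKT metric forces the $\mathrm{SL}(n,\H)$ condition, and then \eqref{eq:Lee} applied to $\bar\Omega^n=F\,\bar\Phi$, with $\Phi$ the holomorphic trivialization of the canonical bundle, gives $\theta=d\log F$. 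Without this structural input your scheme does not close. The paper sidesteps the issue by solving the scalar equation $\hat s^{\mathrm{Ch}}(\Omega)=0$ and then invoking the rigidity result that a compact HKT metric of constant Chern scalar curvature on an $\mathrm{SL}(n,\H)$-manifold is balanced \cite{FG}; either route needs this fact, and it is absent from your write-up. Two smaller omissions: you must normalize the harmonic part so that rescalings of the class are not mistaken for potential deformations (the paper does this via the constant-trace condition of Lemma \ref{Lem:trivdef}), and the bootstrapping of the Sobolev/H\"older solution to a smooth one uses the specific quasi-linear form of the scalar curvature equation, which your $1$-form formulation would need to reproduce.
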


It should be remarked that a notion of extremal HKT metric is still missing in the literature. However, we could take into account HKT metrics of constant Chern--scalar curvature as well as HKT--Einstein metrics as the analogue of constant scalar curvature K\"ahler metrics and K\"ahler--Einstein metrics, respectively. From this point of view, balanced HKT metrics are those HKT--Einstein metrics with vanishing Chern--scalar curvature, just as Calabi--Yau metrics are K\"ahler--Einstein metrics with vanishing Riemannian scalar curvature. Surprisingly, unlike what happens in the K\"ahler framework, compact hypercomplex manifolds cannot support negative constant Chern--scalar curvature HKT metrics (see \cite{FG}). 

\medskip

The rest of the paper deals with the Lie algebra of hyperholomorphic vector fields on a hypercomplex manifold $(M,I,J,K)$, i.e. vector fields such that
\[
\mathcal{L}_XI=\mathcal{L}_XJ=\mathcal{L}_XK=0.
\]
Our interest towards these structures has several motivations. First, hyperholomorphic vector fields form the Lie algebra of the automorphism group of a hypercomplex manifold and thus they deserve to be studied on their own as they convey deep and meaningful information of the hypercomplex structure. Second, the presence of such fields gives constraints to the possible metric geometries arising on a given manifold, specially for the case of balanced HKT structures, which is our primary interest also in view of the quaternionic Calabi conjecture. Third, this kind of fields have been used as a tool in some constructions of new examples, most notably in reduction techniques, such as the hyperk\"ahler reduction \cite{HKLR}, the hypercomplex reduction of Joyce \cite{Joycered} and the HKT reduction of Grantcharov, Papadopoulos, and Poon \cite{GPP} (see also \cite{PPS}). Related investigations have been carried out, e.g., in \cite{OPS,Pap1,Pap2,Ver03}. For starters, we obtain a Hodge--type decomposition for hyperholomorphic vector fields in $T^{1,0}_I(M)$, which allows us to prove the following:

\begin{thm}\label{Thm:main3}
Let $(M,I,J,K,g,\Omega)$ be a compact balanced HKT manifold. Suppose that $X\in T^{1,0}_I(M)$ is a hyperholomorphic vector field and denote by $\alpha\in \Lambda^{1,0}_I(M)$ the form given by $\alpha=\Omega(X)=J\left(X\right)^{\flat}$. Then $\alpha$ and $J\bar \alpha$ are $\Delta_\partial$-harmonic. In particular, the Lie algebra of hyperholomorphic $(1,0)$-vector fields is given by Killing vector fields of constant length.
\end{thm}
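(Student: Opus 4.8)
The plan is to convert hyperholomorphicity of $X$ into differential equations for the pair $(\alpha,J\bar\alpha)$, to use the balanced hypothesis together with compactness to upgrade these to harmonicity, and then to read off the Killing and constant-length assertions.

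\emph{Step 1 (dictionary).} With the stated convention $\alpha=\Omega(X)=(JX)^{\flat}$ and $g$ hyperhermitian one computes $J\alpha=X^{\flat}\in\Lambda^{0,1}_I(M)$ and $J\bar\alpha=\overline{X^{\flat}}=(\bar X)^{\flat}\in\Lambda^{1,0}_I(M)$, so the two forms in the statement are the metric duals of $JX$ and of $\bar X$. Since $\mathcal{L}_X I=0$, the operator $\mathcal{L}_X$ preserves the $I$-bidegree and commutes with $\partial$ and $\bar\partial$; since moreover $\mathcal{L}_X J=0$ it commutes with $\partial_J$ as well. Feeding $\Omega$ — which satisfies $\partial\Omega=0$, and hence $\partial_J\Omega=0$ — into Cartan's formula and comparing $I$-bidegrees gives
\[
\partial\alpha=\mathcal{L}_X\Omega,\qquad \bar\partial\alpha=-\,\iota_X\bar\partial\Omega,
\]
and applying $\mathcal{L}_X$ to $\partial\Omega=\partial_J\Omega=0$ shows that the $(2,0)$-form $\partial\alpha=\mathcal{L}_X\Omega$ is $\partial$-closed and $\partial_J$-closed. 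Finally, $\mathcal{L}_X J=\mathcal{L}_X K=0$ forces the Lie derivatives of the fundamental forms $\omega_J,\omega_K$ to be $2$-forms, which together with $\mathcal{L}_X I=0$ makes $\mathcal{L}_X g$ invariant under $I,J,K$; in particular $\mathcal{L}_X\Omega=(\mathcal{L}_X g)(J\cdot,\cdot)+i(\mathcal{L}_X g)(K\cdot,\cdot)$.

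\emph{Step 2 (balanced input).} Recall that an HKT metric is balanced precisely when its torsion $1$-form vanishes, equivalently when $\Omega^n$ is a nowhere-vanishing holomorphic section of $\Lambda^{2n,0}_I(M)$; in particular $d\Omega^n=0$. As $X$ is $I$-holomorphic, $\mathcal{L}_X\Omega^n$ is of type $(2n,0)$ and equals $\partial(\iota_X\Omega^n)=c_0\,\Omega^n$ for a function $c_0$, and $\bar\partial(c_0\Omega^n)=\mathcal{L}_X\bar\partial\Omega^n=0$ forces $c_0$ to be holomorphic, hence constant. Since $\iota_X\overline{\Omega^n}=0$ and $d\overline{\Omega^n}=0$, one has $\mathcal{L}_X\overline{\Omega^n}=0$, so $\mathcal{L}_X(\Omega^n\wedge\overline{\Omega^n})=c_0\,\Omega^n\wedge\overline{\Omega^n}$; the left-hand side is exact, hence integrates to zero over the compact $M$, while $\Omega^n\wedge\overline{\Omega^n}$ is a nonzero multiple of the volume form, so $c_0=0$. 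Hence $\mathcal{L}_X\Omega^n=0$, i.e. $\partial\alpha\wedge\Omega^{n-1}=0$: the $\partial$- and $\partial_J$-closed $(2,0)$-form $\partial\alpha=\mathcal{L}_X\Omega$ is $\Omega$-primitive, and symmetrically for $J\bar\alpha$.

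\emph{Step 3 (harmonicity, the crux).} Here I would invoke the Hodge-type decomposition of hyperholomorphic $(1,0)$-fields established earlier, together with a Weitzenböck/Bochner identity for $(1,0)$-forms of the type $\Omega(X)$ with $X$ hyperholomorphic: on a \emph{balanced} HKT manifold the curvature/torsion term in this identity integrates to zero — this is exactly where vanishing of the torsion $1$-form is used — so that $0=\langle\Delta_\partial\alpha,\alpha\rangle=\|\partial\alpha\|^2+\|\partial^{*}\alpha\|^2$, whence $\Delta_\partial\alpha=0$; the same argument for $J\bar\alpha$ (or the intertwining of $\Delta_\partial$ with the $\partial_J$-Laplacian by $\beta\mapsto J\bar\beta$, which agree in the balanced case) gives that $J\bar\alpha$ is $\Delta_\partial$-harmonic too. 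I expect this step to be the main obstacle: organizing the Weitzenböck bookkeeping in the presence of the Bismut torsion and checking that ``balanced'' is precisely what kills the bad terms.

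\emph{Step 4 (the ``in particular'').} Harmonicity of $\alpha$ gives $\partial\alpha=0$, hence $\mathcal{L}_X\Omega=0$; since $\mathcal{L}_X g$ is symmetric and $I$-invariant, the identity $(\mathcal{L}_X g)(J\cdot,\cdot)+i(\mathcal{L}_X g)(K\cdot,\cdot)=0$ forces $\mathcal{L}_X g=0$ by a pointwise computation. So $\xi:=X+\bar X$ is Killing and $\mathcal{L}_{\bar X}g=\overline{\mathcal{L}_X g}=0$. Harmonicity of $J\bar\alpha$ gives $\partial(J\bar\alpha)=0$, i.e. $\bar\partial X^{\flat}=0$; therefore, with $\iota_{\bar X}X^{\flat}=g(X,\bar X)=|X|^2$ and Cartan's formula,
\[
\mathcal{L}_{\bar X}X^{\flat}=\iota_{\bar X}\,dX^{\flat}+d\bigl(\iota_{\bar X}X^{\flat}\bigr)=\iota_{\bar X}\,\partial X^{\flat}+d|X|^2,
\]
whereas $\mathcal{L}_{\bar X}X^{\flat}=(\mathcal{L}_{\bar X}g)(X,\cdot)+g([\bar X,X],\cdot)=g([\bar X,X],\cdot)$ is of $I$-type $(1,0)$ because $[\bar X,X]\in T^{0,1}_I(M)$ (as $X$ is $I$-holomorphic). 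Comparing $(0,1)$-parts yields $\bar\partial|X|^2=0$, hence $d|X|^2=0$ since $|X|^2$ is real: $X$, and so $\xi$, has constant length. Since $X\mapsto X+\bar X$ is injective and carries hyperholomorphic $(1,0)$-fields to hyperholomorphic Killing fields of constant length, the stated description of the Lie algebra follows.
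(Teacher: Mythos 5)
Steps 1, 2 and 4 of your plan are sound and essentially track the paper: Step 2 is the content of Lemma \ref{coclosed} (showing $\mathcal{L}_X\Omega^n=0$, hence primitivity of $\mathcal{L}_X\Omega=\partial\alpha$ and, dually, coclosedness of $\alpha$), and Step 4's two arguments (Killing from $\mathcal{L}_X\Omega=0$; constant length from contracting the harmonic form with a second hyperholomorphic field) match the end of the paper's proof of Theorem \ref{dualhol}. The genuine gap is Step 3, which you yourself identify as the crux and then do not carry out. The Weitzenb\"ock identity you invoke, which would give $0=\langle\Delta_\partial\alpha,\alpha\rangle=\|\partial\alpha\|^2+\|\partial^*\alpha\|^2$ directly, is neither established nor, as stated, the one available in this setting: the Bochner-type formula that does hold here (Proposition \ref{Prop:Bochner}) yields, in the balanced case, only $\|\partial^*\alpha\|^2_{L^2}+\|\partial^*J\bar\alpha\|^2_{L^2}=0$, i.e. coclosedness --- which you already essentially have from Step 2 --- and says nothing about $\partial\alpha$. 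So the key assertion $\partial\alpha=0$ is left unproved, and with it the harmonicity on which all of Step 4 depends.

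The fix is short and uses only ingredients you already have on the table, with no Weitzenb\"ock bookkeeping at all. By the Hodge-type decomposition of Theorem \ref{SL(n,H)-hyperholomorphic} together with $\partial^*\alpha=0$, one can write $\alpha=(\alpha)_H+\partial_J v$, so that $\partial\alpha=\mathcal{L}_X\Omega=\partial\partial_J v$. Your Step 2 shows $\Lambda(\mathcal{L}_X\Omega)=0$, i.e. $\Lambda(\partial\partial_J v)=0$; but $\Lambda\partial\partial_J$ is precisely the Chern Laplacian acting on functions, so the maximum principle on the compact manifold $M$ forces $v$ to be constant, whence $\partial\alpha=0$ and $\alpha$ is $\Delta_\partial$-harmonic. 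Harmonicity of $J\bar\alpha$ then follows from Proposition \ref{p0even} (equivalently, from the intertwining of $\Delta_\partial$ and $\Delta_{\partial_J}$ you allude to, which requires the balanced hypothesis). Until Step 3 is completed along these or equivalent lines, the proof is incomplete.
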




The organization of the paper is the following. In Section \ref{Sec:pre}, we cover essential notions that will be useful throughout the paper. Section \ref{Sec:Open} is devoted to the proof of Theorem \ref{Thm:main1}. In Section \ref{Sec:harmonic}, we collect a few observations on harmonicity of forms. The rest of the paper deals with hyperholomorphic vector fields. More precisely, in Section \ref{Sec:1,0}, we investigate the properties of hyperholomorphic vector fields of type $(1,0)$ with respect to one of the complex structures. When there exists a compatible balanced HKT metric, the Lie algebra of such vector fields is abelian and consists of Killing vector fields of constant norm. Section \ref{Sec:abelian} is then focused on Lie algebras with a left-invariant abelian hypercomplex structure
. Finally, Section \ref{Sec:Einstein} deals with the interplay between the Einstein condition with a non-zero Einstein constant and hyperholomorphic $(1,0)$-vector fields on compact hypercomplex manifolds, showing that the latter cannot exist.

\medskip 

\noindent {\bf Acknowledgment.} The present work was initiated when the first author was visiting Mehdi Lejmi at the CUNY Graduate Center of New York, he would like to thank him deeply for the delightful hospitality and financial support. He is also grateful to INdAM as a recipient of a scholarship aimed at carrying out a research project abroad. The first author is indebted to Beatrice Brienza and Luigi Vezzoni for useful discussions and for showing interest in the paper. Finally, he acknowledges Elia Fusi for a careful reading of an early draft of the paper. The second author acknowledges support provided by the Simons Foundation Grant \#636075.

\section{Preliminaries}\label{Sec:pre}
In this section we review some basic facts concerning hypercomplex and HKT geometry with the purpose of fixing terminologies and notations adopted throughout the paper. Let $(M,I,J,K,g,\Omega)$ be a hyperhermitian manifold. Here $(I,J,K)$ is a hypercomplex structure, $g$ is a Riemannian metric compatible with $(I,J,K)$ and
\begin{equation}\label{Omega}
\Omega=\frac{g(J\cdot,\cdot)+\sqrt{-1}\,g(K\cdot,\cdot)}{2}
\end{equation}
is a non-degenerate $(2,0)$-form with respect to $I$ that is q-real and q-positive:
\[
J\Omega=\bar \Omega, \qquad \Omega(X,JX)>0,\, \text{ for all non-zero } X\in TM.
\]
As is customary, we will occasionally abuse language and call $\Omega$ a ``hyperhermitian metric'', this is motivated by the fact that a $(2,0)$-form with the properties above completely determines a hyperhermitian metric related to $\Omega$ as in \eqref{Omega}. A hyperhermitian structure is called \textbf{HKT} if
\[
\partial \Omega=0,
\]
where $\partial$ is the (conjugate) Dolbeault operator taken with respect to $I$. The \textbf{Lee form} of an Hermitian manifold $(M,J,g)$ is defined as the $1$-form $\theta:=J\delta F$, where $F$ is the fundamental form associated to $(J,g)$ and $\delta$ the codifferential. Our convention for the action of $J$ on a $k$-form $\alpha$ is 
\[
J\alpha=\alpha(J^{-1}\cdot,\dots,J^{-1}\cdot)=(-1)^k\alpha(J\cdot,\dots,J\cdot)\,.
\]
When $ \theta=0 $, equivalently $dF^{m-1}=0$, $m=\dim_\C(M)$, the Hermitian structure $(J,g)$ is called \textbf{balanced}. On a hyperhermitian manifold $(M,I,J,K,g,\Omega)$ the Lee forms of all Hermitian structures $(g,I),(g,J),(g,K)$ coincide (see \cite[Proposition 3.4]{FG}) and we can talk about the Lee form $\theta$ of $(M,I,J,K,g)$ without ambiguity. Furthermore as follows from the computations in \cite{Ver09} (see also \cite[Lemma 2.2]{BGV}), on a HKT manifold the Lee form satisfies
\begin{equation}\label{eq:Lee}
\partial \bar \Omega^n=\theta^{1,0} \wedge \bar \Omega^n,
\end{equation}
where $(\cdot)^{p,q}$, here and in the following, denotes the $(p,q)$-part of a form taken with respect to $I$. Thus, the structure is balanced HKT if and only if the top wedge power of the hyperhermitian metric is a holomorphic $(2n,0)$-form, where $2n=\dim_\C(M)$. From \eqref{eq:Lee} we also deduce the following formula
\begin{equation}\label{eq:del*Omega}
\partial^* \Omega= -*\partial * \Omega =-*\partial\left( \frac{\Omega^{n-1}\wedge \bar \Omega^n}{(n-1)!n!} \right) =-*\left( \theta^{1,0} \wedge \frac{\Omega^{n-1}\wedge \bar \Omega^n}{(n-1)!n!} \right) =-J\theta^{0,1}
\end{equation}
where we used that for every $\alpha \in \Lambda^{1,0}_I(M)$ the Hodge-star operator acts as
\[
*\alpha=J\bar \alpha \wedge \frac{\Omega^{n-1}\wedge \bar \Omega^n}{(n-1)!n!}.
\]

Another characterization of the balanced condition for HKT manifolds is the vanishing of the Chern--Ricci form of all Hermitian structures $(g,I),(g,J),(g,K)$ (see, e.g. \cite[Lemma 2.2]{BGV}). The \textbf{Chern--Ricci form} $\rho_I$ of a Hermitian structure $(g,I)$ is given locally by
\[
\rho_I=-i\partial \bar \partial \log \det(g).
\]
The trace of the Chern--Ricci form with respect to $F$ yields the \textbf{Chern--scalar curvature}. Again, it turns out that all Chern--scalar curvatures on a hyperhermitian manifold agree (see \cite[Proposition 3.10]{FG}) and there is no ambiguity when referring to the Chern--scalar curvature $s^{\mathrm{Ch}}(\Omega)$ of a hyperhermitian structure $(I,J,K,g,\Omega)$. We also report the following explicit formula
\begin{equation}\label{eq:sCh}
s^{\mathrm{Ch}}(\Omega)=2\Lambda  \left(\partial_J (\theta^{1,0}) \right):=2n\frac{\partial_J( \theta^{1,0}) \wedge \Omega^{n-1}}{\Omega^n},
\end{equation}
where $\partial_J=J\bar \partial J^{-1} \colon \Lambda^{p,q}_I(M) \to \Lambda^{p+1,q}_I(M)$. Here, $\Lambda$ is the dual of the Lefschetz operator $\Omega \wedge -$. It will also be useful to recall the following identities proved in \cite{Ver02}:
\begin{equation}\label{HKTidentities}
    [\Lambda,\partial]=-\partial_J^*,\qquad [\partial_J,\Lambda]=-\partial^*.
\end{equation}

As advocated in \cite{FG} balanced HKT structures should be seen as a subclass of HKT structures satisfying an appropriate Einstein condition. A HKT manifold $(M,I,J,K,g,\Omega)$ is called \textbf{HKT--Einstein} if
\[
\partial_J(\theta^{1,0})=\lambda \Omega,
\]
for a real constant $\lambda$. Equivalently
\[
\frac{\rho_L-L'\rho_L}{2}=\lambda F_L,
\]
where $L,L'\in \{I,J,K\}$ anticommute and $F_L$ is the fundamental form of $(L,g)$. From \eqref{eq:sCh} it is clear that a HKT--Einstein metric has constant Chern--scalar curvature equal to $2n\lambda$. Moreover, as anticipated, balanced HKT metrics on compact hypercomplex manifolds are precisely those HKT--Einstein metrics with vanishing Chern--scalar curvature.

\medskip

On a hypercomplex manifold $(M,I,J,K)$ there is a unique torsion-free connection $\nabla$ that preserves the hypercomplex structure $(I,J,K)$. This connection is named after Obata, who discovered it \cite{Obata}. By the holonomy principle, the holonomy group of $\nabla$ lies inside $\mathrm{GL}(n,\H)$, where $2n=\dim_{\C}(M)$ and $\H$ is the algebra of quaternions. Whenever $\mathrm{Hol}(\nabla)\subseteq \mathrm{SL}(n,\H):=[\mathrm{GL}(n,\H),\mathrm{GL}(n,\H)]$ the hypercomplex manifold $(M,I,J,K)$ is called a \textbf{$\mathrm{SL}(n,\H)$-manifold}. In addition, a hypercomplex manifold is $\mathrm{SL}(n,\H)$ if and only if it admits an Obata-parallel $(2n,0)$-form $\Phi \in \Lambda^{2n,0}_I(M)$. Such a form can further be chosen q-positive, i.e. it is the $n$\textsuperscript{th} wedge power of a hyperhermitian metric $\Omega$. As a consequence, the canonical bundle has to be holomorphically trivial. In \cite{Ver07} Verbitsky proved in the compact setting that when there exists a compatible HKT metric the $\mathrm{SL}(n,\H)$-condition is actually equivalent to having holomorphically trivial canonical bundle; in particular, we see from \eqref{eq:Lee} that the existence of a balanced HKT structure forces the manifold to be $\mathrm{SL}(n,\H)$. Furthermore, for compact manifolds, the class of those admitting a balanced HKT structure is precisely the intersection of the class of those carrying HKT--Einstein metrics and that of $\mathrm{SL}(n,\H)$-manifolds. The HKT assumption on Verbitsky's result cannot be removed, indeed as shown by Andrada and Tolcahier \cite[Example 6.3]{AT} there might be compact hypercomplex manifolds that are not $\mathrm{SL}(n,\H)$ but have a holomorphically trivial canonical bundle. A full characterization of the $\mathrm{SL}(n,\H)$-condition on compact hypercomplex manifolds was recently given in \cite[Theorem 1.2]{FG} in terms of the holomorphic triviality of the canonical bundle and an additional metric condition that is weaker than HKT. Also, on compact HKT manifolds, the $\mathrm{SL}(n,\H)$-condition is equivalent to the validity of the $\partial\partial_J$-lemma: every $\partial$-closed, $\partial_J$-exact $(p,0)$-form (with respect to $I$) is $\partial \partial_J$-exact (see \cite[Theorem 6]{GLV} and \cite[Corollary 1.3]{FG}).

\medskip

Given a hyperhermitian structure $(g,\Omega)$ on a $\mathrm{SL}(n,\H)$-manifold $(M,I,J,K,\Phi)$ we use two types of Hodge-star operator. The usual one $*\colon \Lambda^{p,q}_I(M) \to \Lambda^{2n-p,2n-q}_I(M)$, determined uniquely by the Riemannian structure:
\[
\alpha \wedge * \beta=g(\alpha,\beta)\frac{\Omega^n\wedge \bar \Omega^n}{(n!)^2}\,, \quad \text{for every }\alpha,\beta \in \Lambda^{p,q}_I(M),
\]
and the operator $\star_\Phi \colon \Lambda^{p,0}_I(M) \to \Lambda^{2n-p,0}_I(M)$ introduced in \cite[Section 6]{LW} with the aid of the holomorphic form $\Phi$:
\[
\alpha \wedge \star_\Phi \beta\wedge \bar \Phi=g(\alpha,\beta)\frac{\Omega^n}{n!}\wedge \bar \Phi\,, \quad \text{for every }\alpha,\beta \in \Lambda^{p,q}_I(M).
\]
These allow one to define, as usual, the adjoint operators $\partial^*=-*\partial\,*$, $\partial_J^*=-*\partial_J\,*$, $\partial^{\star_\Phi}=-\star_\Phi\partial\,\star_\Phi$, $\partial_J^{\star_\Phi}=-\star_\Phi\partial_J\,\star_\Phi$. When taken with respect to a balanced HKT metric, it has been observed in \cite[Section 3.2]{GTar} that the adjoints with respect to $*$ and $\star_\Phi$ coincide. Furthermore, we recall that, on compact balanced HKT manifolds, the following Laplacians coincide \cite[Proposition 3.4]{GTar}:
\[
\Delta_\partial:=\partial \partial^*+\partial^*\partial, \qquad \Delta_{\partial_J}=\partial_J\partial_J^*+\partial_J^*\partial_J.
\]

\section{Openness of the balanced HKT cone}\label{Sec:Open}

Let $(M,I,J,K)$ be a hypercomplex manifold and fix a hyperhermitian structure $(g_0,\Omega_0)$ on it. If the form $\Omega_0$ is HKT it defines a class in the $(2,0)$ quaternionic Bott-Chern cohomology:
\[
H^{2,0}_{\mathrm{BC}}(M)=\frac{\{ \alpha \in \Lambda^{2,0}_IM \mid \partial \alpha =\partial_J \alpha=0\}}{\partial \partial_J (C^\infty(M,\C))}\,.
\]
If the manifold $M$ is compact then $H^{2,0}_{\mathrm{BC}}(M)$ is finite-dimensional, as it is isomorphic to the space of $(2,0)$ quaternionic Bott-Chern harmonic forms (see \cite{GLV}):
\[
\mathcal{H}^{2,0}_{\mathrm{BC}}(M,\Omega_0)=\{ \alpha \in \Lambda^{2,0}_IM \mid \partial \alpha=\partial_J \alpha = \partial_J^*\partial^* \alpha=0\}\,.
\]
Indeed, $\mathcal{H}^{2,0}_{\mathrm{BC}}(M,\Omega_0)$ coincides with the kernel of the fourth order elliptic operator
\[
\Delta_{\mathrm{BC}}=\partial^*\partial + \partial_J^*\partial_J+ \partial \partial_J \partial_J^* \partial^*+\partial_J^*\partial^*\partial \partial_J+\partial_J^*\partial \partial^*\partial_J+\partial^*\partial_J \partial_J^* \partial
\]
and we have a decomposition
\begin{equation}\label{eq:BCdec}
\Ker\left(\partial\vert_{\Lambda^{2,0}_I(M)}\right)\cap \Ker\left(\partial_J\vert_{\Lambda^{2,0}_I(M)}\right)=\mathcal{H}^{2,0}_{\mathrm{BC}}(M,\Omega_0) \oplus \partial \partial_J(C^\infty(M,\C))\,.
\end{equation}
Since any HKT form $\Omega$ on $(M,I,J,K)$ determines a class $[\Omega]_{\mathrm{BC}}\in H^{2,0}_{\mathrm{BC}}(M)$, it makes sense to consider the set of all $(2,0)$ quaternionic Bott-Chern cohomology classes admitting a representative which is a HKT form:
\[
\mathcal{K}:=\{ \Theta \in H^{2,0}_{\mathrm{BC}}(M)\mid \text{there exists } \alpha \in \Theta \text{ such that }\alpha>0 \}\,.
\]
The set $\mathcal{K}$ is a convex open cone inside the vector space $H^{2,0}_{\mathrm{BC}}(M)$ and we shall call it the \textbf{HKT cone} of the hypercomplex manifold $(M,I,J,K)$. We are interested in studying the subcone of cohomology classes containing a balanced HKT representative. More precisely, given a balanced HKT class we aim to find a balanced HKT representative for all classes sufficiently close it.

The decomposition \eqref{eq:BCdec} restricts to q-real forms as follows:
\begin{equation}
\Ker\left(\partial\vert_{\Lambda^{2,0}_{I,\R}(M)}\right)\cap \Ker\left(\partial_J\vert_{\Lambda^{2,0}_{I,\R}(M)}\right)=\mathcal{H}^{2,0}_{\R}(M,\Omega_0) \oplus \partial \partial_J(C^\infty(M,\R)),
\end{equation}
where $\mathcal{H}^{2,0}_\R(M,\Omega_0)$ denotes the space of quaternionic Bott-Chern harmonic q-real forms with respect to $\Omega_0$ of type $(2,0)$. In particular, any HKT metric $ \Omega$ can be written uniquely as 
\[
\Omega=\alpha+\partial \partial_J f,
\]
with $(\alpha,f)\in \mathcal{H}^{2,0}_{\R}(M,\Omega_0)\times C^\infty_0(M,\R)$, where $\alpha$ is the harmonic representative of $[\Omega]_{\mathrm{BC}} $ with respect to $\Omega_0$ and $C^\infty_0(M,\R)$ is the space of real-valued smooth functions with zero mean:
\[
C^\infty_0(M,\R):=\left\{ f\in C^\infty(M,\R) \, \Big \vert  \int_M f\, \Omega_0^{n}\wedge \bar \Omega_0^n=0 \right\}.
\]

Let $\mathcal{M}_{\mathrm{HKT}}$ denote the space of all HKT forms on $M$ compatible with the fixed hypercomplex structure $(I,J,K)$ and define the map
\[
\Psi \colon \mathcal{M}_{\mathrm{HKT}} \to \mathcal{H}^{2,0}_\R(M,\Omega_0)\times C^\infty_0(M,\R)\,, \qquad \Omega \mapsto \left(\alpha,\hat s^{\mathrm{Ch}}(\Omega)\right),
\]
where $\hat s^{\mathrm{Ch}}(\Omega)$ is the projection of the Chern scalar curvature onto $C^\infty_0(M,\R)$:
\[
\hat s^{\mathrm{Ch}}:=s^{\mathrm{Ch}}(\Omega)- \frac{1}{\mathrm{Vol}(M,\Omega_0)} \int_M s^{\mathrm{Ch}}(\Omega)\, \frac{\Omega_0^{n}\wedge \bar \Omega_0^n}{(n!)^2}.
\]
Recall that a metric $\Omega\in \mathcal{M}_{\mathrm{HKT}}$ is balanced if and only if its Chern-scalar curvature vanishes (see \cite[Lemma 3.14]{FG}), also, a balanced HKT metric is Bott-Chern harmonic thanks to \eqref{eq:del*Omega} and thus we have $\Psi(\Omega_0)=(\Omega_0,0)$. More generally, we make the following observation:

\begin{lem}\label{Lem:trivdef}
Let $(M,I,J,K,g_0,\Omega_0)$ be a compact balanced HKT manifold, then a $\partial$-closed, $\partial_J$-closed $(2,0)$-form is quaternionic Bott-Chern harmonic if and only if it has constant trace.
\end{lem}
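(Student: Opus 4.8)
The plan is to collapse the fourth-order Bott--Chern harmonicity condition to a first-order one by invoking the quaternionic K\"ahler identities \eqref{HKTidentities}. Let $\alpha$ be a $(2,0)$-form with $\partial\alpha=\partial_J\alpha=0$ and set $f:=\Lambda\alpha$, a smooth $\C$-valued function which is exactly the trace of $\alpha$ with respect to $\Omega_0$. Using $[\partial_J,\Lambda]=-\partial^*$ together with $\partial_J\alpha=0$ I would compute
\[
\partial^*\alpha=(\Lambda\partial_J-\partial_J\Lambda)\alpha=-\partial_J(\Lambda\alpha)=-\partial_J f .
\]
Consequently the only remaining requirement for $\alpha$ to lie in $\mathcal{H}^{2,0}_{\mathrm{BC}}(M,\Omega_0)$, namely $\partial_J^*\partial^*\alpha=0$, is equivalent to $\partial_J^*\partial_J f=0$.

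For the implication ``constant trace $\Rightarrow$ harmonic'': if $f$ is constant then $\partial_J f=0$, so in fact $\partial^*\alpha=0$ and a fortiori $\partial_J^*\partial^*\alpha=0$; hence $\alpha$ is quaternionic Bott--Chern harmonic. Either of the two adjoints may be used here, since for a balanced HKT metric the $*$- and $\star_\Phi$-adjoints coincide, as recalled in Section~\ref{Sec:pre}.

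For the converse I would pair $\partial_J^*\partial_J f=0$ with $f$ in the $L^2$ inner product: since $\partial_J^*$ is the formal adjoint of $\partial_J$, this gives $\|\partial_J f\|_{L^2}^2=0$, whence $\partial_J f=0$. Because $\partial_J=J\bar\partial J^{-1}$ acts on functions by $h\mapsto J(\bar\partial h)$ and $J$ carries $\Lambda^{0,1}_I(M)$ isomorphically onto $\Lambda^{1,0}_I(M)$, this is equivalent to $\bar\partial f=0$, so $f$ is $I$-holomorphic; on the compact connected manifold $M$ it must then be constant. This closes the argument.

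I do not anticipate a genuine obstacle: the only mildly delicate point is the bookkeeping of adjoints and of the operator $\Lambda$ dual to $\Omega_0\wedge-$, but the coincidence of the two Hodge-star formalisms for a balanced HKT metric removes the ambiguity. The actual content is simply the observation that on an HKT manifold the codifferential $\partial^*$ of a $\partial_J$-closed $(2,0)$-form equals $-\partial_J$ of its trace, which turns the harmonicity condition into a statement about the $\partial_J$-kernel of a function.
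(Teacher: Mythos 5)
Your proof is correct and follows essentially the same route as the paper: both hinge on the identity $[\partial_J,\Lambda]=-\partial^*$ applied to the $\partial_J$-closed form $\alpha$ to get $\partial^*\alpha=-\partial_J(\Lambda_{\Omega_0}\alpha)$, thereby reducing the Bott--Chern harmonicity condition $\partial_J^*\partial^*\alpha=0$ to $\partial_J^*\partial_J(\Lambda_{\Omega_0}\alpha)=0$. The only (inessential) difference is the last step: the paper recognizes $-\partial_J^*\partial_J$ on functions as the Chern Laplacian $\Lambda_{\Omega_0}\partial\partial_J$ and invokes the maximum principle, whereas you integrate by parts and use that a $\partial_J$-closed function is $I$-holomorphic, hence constant.
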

\begin{proof}
Suppose $\alpha\in \Lambda^{2,0}_I(M) $ is $\partial$-closed and $\partial_J$-closed then, using the balanced HKT identities \eqref{HKTidentities} and the fact that the Chern Laplacian $\Delta_{\Omega_0}^{\mathrm{Ch}} \colon C^\infty(M,\R) \to C^\infty(M,\R)$ can be written as $\Delta_{\Omega_0}f=\Lambda_{\Omega_0}(\partial \partial_J f)$ we get
\begin{eqnarray*}
\Delta_{\Omega_0}^{\mathrm{Ch}} \left( \Lambda_{\Omega_0} \alpha\right)&=& \Lambda_{\Omega_0} (\partial \partial_J \Lambda_{\Omega_0} \alpha),\\
&=&-\partial_J^*\partial_J (\Lambda_{\Omega_0} \alpha),\\
&=&\partial_J^*\partial^*  \alpha
\end{eqnarray*}
and so we conclude thanks to the maximum principle.
\end{proof}

Our aim is to compute the derivative of the map $\Psi$ at $\Omega_0$ and then apply the inverse function theorem to show that all the Bott-Chern classes nearby $[\Omega_0]_{\mathrm{BC}}$ admit a q-positive representative with vanishing Chern-scalar curvature, which is thus a balanced HKT metric.

\begin{prop}\label{TPsi}
Let $(M,I,J,K,g_0,\Omega_0)$ be a compact balanced HKT manifold. The derivative of $\Psi$ at $\Omega_0$
\[
T_{\Omega_0}\Psi \colon T_{\Omega_0}\mathcal{M}_{\mathrm{HKT}} \cong \mathcal{H}^{2,0}_\R(M,\Omega_0)\oplus C^\infty_0(M,\R) \to \mathcal{H}^{2,0}_\R(M,\Omega_0)\oplus C^\infty_0(M,\R)
\]
is given by
\[
T_{\Omega_0}\Psi( \beta, h)=\left(\beta,-2\Delta_{g_0}^2 h\right)\,,
\]
where $\Delta_{g_0}$ is the de Rham laplacian of $g_0$.
\end{prop}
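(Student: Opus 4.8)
\emph{Proposal.} The plan is to differentiate $\Psi$ along the straight curve $\Omega_t:=\Omega_0+t\gamma$ through $\Omega_0$, where $\gamma:=\beta+\partial\partial_J h$ is the tangent vector corresponding to $(\beta,h)\in\mathcal H^{2,0}_\R(M,\Omega_0)\oplus C^\infty_0(M,\R)$ under the identification of $T_{\Omega_0}\mathcal M_{\mathrm{HKT}}$ with that space (here $\gamma$ is automatically q-real and $\partial$- and $\partial_J$-closed, and $\Omega_t$ stays q-positive, hence in $\mathcal M_{\mathrm{HKT}}$, for $|t|$ small). For the first component of $T_{\Omega_0}\Psi$: since $\partial\partial_J h$ is $\partial\partial_J$-exact we have $[\Omega_t]_{\mathrm{BC}}=[\Omega_0]_{\mathrm{BC}}+t[\beta]_{\mathrm{BC}}$, and both $\Omega_0$ (balanced HKT, hence Bott--Chern harmonic by \eqref{eq:del*Omega}) and $\beta$ are $\Omega_0$-harmonic; so the $\Omega_0$-harmonic representative of $[\Omega_t]_{\mathrm{BC}}$ is $\Omega_0+t\beta$, whose $t$-derivative at $0$ is $\beta$.

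For the second component I would first rewrite the Chern--scalar curvature in terms of the background. Since $\Omega_0^n$ is a holomorphic $(2n,0)$-form, $\partial\bar\Omega_0^n=0$, so for \emph{any} HKT metric $\Omega$ the ratio $\varphi_\Omega:=\Omega^n/\Omega_0^n$ is a well-defined smooth function, which is real and positive by q-reality and q-positivity. Applying \eqref{eq:Lee} to $\Omega$ and using $\bar\Omega^n=\varphi_\Omega\bar\Omega_0^n$ gives $\partial\varphi_\Omega\wedge\bar\Omega_0^n=\varphi_\Omega\,\theta^{1,0}\wedge\bar\Omega_0^n$, and since wedging a $(1,0)$-form with the non-degenerate $\bar\Omega_0^n$ is injective we get $\theta^{1,0}=\partial\log\varphi_\Omega$. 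Inserting this into \eqref{eq:sCh} and using $\partial_J\partial=-\partial\partial_J$,
\[
s^{\mathrm{Ch}}(\Omega)=-2\,\Lambda_\Omega\bigl(\partial\partial_J\log\varphi_\Omega\bigr)=-2\,\Delta^{\mathrm{Ch}}_\Omega\log\varphi_\Omega,
\]
where $\Delta^{\mathrm{Ch}}_\Omega f=\Lambda_\Omega(\partial\partial_J f)$ is the Chern Laplacian as in the proof of Lemma \ref{Lem:trivdef}.

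Differentiating this identity along $\Omega_t$: one has $\varphi_{\Omega_0}=1$ and, in the convention of \eqref{eq:sCh}, $\tfrac{d}{dt}\big|_0\varphi_{\Omega_t}=n\,\frac{\gamma\wedge\Omega_0^{n-1}}{\Omega_0^n}=\Lambda_{\Omega_0}\gamma$, hence $\tfrac{d}{dt}\big|_0\log\varphi_{\Omega_t}=\Lambda_{\Omega_0}\gamma=\Lambda_{\Omega_0}\beta+\Delta^{\mathrm{Ch}}_{\Omega_0}h$. In the $t$-derivative of $-2\,\Delta^{\mathrm{Ch}}_{\Omega_t}\log\varphi_{\Omega_t}$ the contribution of the variation of $\Delta^{\mathrm{Ch}}_{\Omega_t}$ vanishes, being applied to $\log\varphi_{\Omega_0}=0$, so $\tfrac{d}{dt}\big|_0 s^{\mathrm{Ch}}(\Omega_t)=-2\,\Delta^{\mathrm{Ch}}_{\Omega_0}\bigl(\Lambda_{\Omega_0}\beta+\Delta^{\mathrm{Ch}}_{\Omega_0}h\bigr)$. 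By Lemma \ref{Lem:trivdef}, $\Lambda_{\Omega_0}\beta$ is constant (as $\beta$ is $\partial$- and $\partial_J$-closed and Bott--Chern harmonic), so the first term dies and we are left with $-2(\Delta^{\mathrm{Ch}}_{\Omega_0})^2 h$; subtracting the mean to land in $C^\infty_0(M,\R)$ changes nothing, since this function already integrates to zero against $\Omega_0^n\wedge\bar\Omega_0^n$ ($\Delta^{\mathrm{Ch}}_{\Omega_0}$ being self-adjoint for the Riemannian volume of the balanced metric). Finally, on the balanced background the Lee form vanishes and $\Delta^{\mathrm{Ch}}_{\Omega_0}$ agrees with the de Rham Laplacian $\Delta_{g_0}$ on functions, giving $-2\Delta_{g_0}^2 h$ for the second component, as claimed.

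I expect the main obstacle to be the last step: verifying, with the normalizations of Section \ref{Sec:pre} (in \eqref{Omega}, \eqref{eq:sCh}, and the definition of $\Lambda$), that $\Lambda_{\Omega_0}\partial\partial_J(\cdot)$ really coincides with $\Delta_{g_0}$ on functions — the torsion term separating the two Laplacians being a multiple of the Lee form, which vanishes on the balanced background — as this is precisely what pins down the constant $-2$. A secondary point needing care is the identification $T_{\Omega_0}\mathcal M_{\mathrm{HKT}}\cong\mathcal H^{2,0}_\R(M,\Omega_0)\oplus C^\infty_0(M,\R)$, namely that a $\partial\partial_J$-exact perturbation is faithfully encoded by a mean-zero potential, which follows from the maximum principle for $\Delta^{\mathrm{Ch}}_{\Omega_0}$ already invoked in Lemma \ref{Lem:trivdef}.
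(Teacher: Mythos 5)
Your proposal is correct and follows essentially the same route as the paper: decompose the variation into a Bott--Chern harmonic part and a $\partial\partial_J$-exact part, kill the harmonic contribution to the scalar-curvature variation via Lemma \ref{Lem:trivdef}, and convert $\Delta^{\mathrm{Ch}}_{\Omega_0}$ into $\pm\Delta_{g_0}$ using Gauduchon's formula on the balanced background (the sign you worry about is immaterial since the operator appears squared). The only cosmetic difference is that you first derive the closed expression $s^{\mathrm{Ch}}(\Omega)=-2\Delta^{\mathrm{Ch}}_{\Omega}\log(\Omega^n/\Omega_0^n)$ and differentiate it, whereas the paper differentiates \eqref{eq:Lee} and \eqref{eq:sCh} directly to reach the same intermediate quantities.
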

\begin{proof}
Let $\Omega_t$ be a curve in $\mathcal{M}_{\mathrm{HKT}}$ for $t\in (-\epsilon,\epsilon)$ such that $\Omega_t\vert_{t=0}=\Omega_0$. As before, we decompose $\Omega_t=\alpha_t+\partial \partial_J f_t $ where $(\alpha_t,f_t)\in \mathcal{H}^{2,0}_\R(M,\Omega_0)\oplus C^\infty_0(M,\R)$ is such that $ (\alpha_0,f_0) =(\Omega_0,0) $. For any quantity $A_{t}$ depending on $t$ in the following we shall use the notation $\dot A:=\frac{d}{dt}\vert_{t=0} A_{t}$. First, in order to avoid trivial deformations of $\alpha$ by rescalings, we take advantage of Lemma \ref{Lem:trivdef} and impose the condition
\begin{equation}\label{eq:trivdef}
\Lambda_{\Omega_0} \dot \alpha =0\,.
\end{equation}
Now, we begin by computing the variation of the volume form
\begin{eqnarray*}
\frac{d}{dt}\Big \vert_{t=0} \Omega_{t}^{n}&=&n (\dot \alpha +\partial \partial_J \dot f) \wedge \Omega^{n-1}_0,\\
&=&\Delta^{\mathrm{Ch}}_{\Omega_0} \dot f\, \Omega_0^{n},
\end{eqnarray*}
where we used \eqref{eq:trivdef}. Let $\theta_t$ be the Lee form of $\Omega_t$, then from \eqref{eq:Lee} we deduce
\begin{eqnarray*}
\dot \theta\wedge \bar \Omega_0^n+\Delta_{\Omega_0}^{\mathrm{Ch}}\dot f\, \theta_0\wedge \bar \Omega_0^n&=&\partial \left( \Delta_{\Omega_0}^{\mathrm{Ch}} \dot f\, \bar \Omega_0^n \right),\\
&=&\partial \Delta_{\Omega_0}^{\mathrm{Ch}} \dot f \wedge  \bar \Omega_0^n + \Delta_{\Omega_0}^{\mathrm{Ch}} \dot f\,\theta_0 \wedge  \bar \Omega_0^n
\end{eqnarray*}
and so we obtain
\[
\dot \theta= d \Delta_{\Omega_0}^{\mathrm{Ch}} \dot f\,.
\]
We can now compute the variation of the Chern scalar curvature $\dot s^{\mathrm{Ch}}:=\frac{d}{dt}\vert_{t=0} s^{\mathrm{Ch}}(\Omega_t)$. From \eqref{eq:sCh} we have
\[
s^{\mathrm{Ch}}(\Omega_t)\Omega_t^n=2n\, \partial_J (\theta^{1,0}_t)\wedge \Omega^{n-1}_t
\]
hence
\[
\dot s^{\mathrm{Ch}} \Omega_0^n+s^{\mathrm{Ch}}(\Omega_0) \Delta_{\Omega_0}^{\mathrm{Ch}} \dot f \, \Omega_0^n =2n\, \partial_J (\dot \theta^{1,0})\wedge \Omega_0^{n-1}+2n\,\partial_J (\theta_0^{1,0}) \wedge \frac{d}{dt}\Big \vert_{t=0} \Omega_t^{n-1} \,.
\]
Now, since $\Omega_0$ is balanced $\theta_0=0$ and $s^{\mathrm{Ch}}(\Omega_0)=0$ so we infer
\begin{eqnarray*}
\dot s^{\mathrm{Ch}}&=&2n\, \partial_J (\dot \theta^{1,0})\wedge \Omega_0^{n-1},\\
&=&2n\,\partial_J\partial \Delta_{\Omega_0}^{\mathrm{Ch}} \dot f \wedge \Omega_0^{n-1},\\
&=&-2\left(\Delta_{\Omega_0}^{\mathrm{Ch}}\right)^2 \dot f\,,
\end{eqnarray*}
in order to conclude we only need to observe that in the balanced case the Chern Laplacian acting on functions coincides with the opposite of the de Rham Laplacian, which follows from the well-known formula (see \cite[pp. 502-503]{G}):
\[
\Delta^{\mathrm{Ch}}_{\Omega_0} h=-\Delta_{g_0}h-g_0(dh,\theta_{0}), \qquad h\in C^\infty(M,\R). \qedhere
\]
\end{proof}

We are ready to prove Theorem \ref{Thm:main1}.

\begin{thm}
Let $(M,I,J,K)$ be a compact hypercomplex manifold. Then the balanced HKT cone is open in the HKT cone.
\end{thm}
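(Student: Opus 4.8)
The plan is to prove the statement by an inverse function theorem argument applied to the map $\Psi$, following the strategy announced before Proposition~\ref{TPsi}. I fix a balanced HKT class and represent it by a balanced HKT metric $\Omega_0$, so that $\Psi(\Omega_0)=(\Omega_0,0)$. I would work in the H\"older scale: after the identification $\Omega=\alpha+\partial\partial_J f$ furnished by the Bott--Chern Hodge decomposition \eqref{eq:BCdec}, the space $\mathcal{M}_{\mathrm{HKT}}$ is an open subset (q-positivity being an open condition and $\partial$-closedness automatic) of the Banach manifold $\mathcal{H}^{2,0}_\R(M,\Omega_0)\times C^{k+4,\gamma}_0(M,\R)$ for $k$ large and $0<\gamma<1$, while $\Psi$ extends to a smooth map into $\mathcal{H}^{2,0}_\R(M,\Omega_0)\times C^{k,\gamma}_0(M,\R)$; the loss of four derivatives is expected, since $s^{\mathrm{Ch}}$ is second order in the metric, hence fourth order in $f$, in accordance with the fourth-order shape of $T_{\Omega_0}\Psi$.

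I then invoke Proposition~\ref{TPsi}: $T_{\Omega_0}\Psi(\beta,h)=(\beta,-2\Delta_{g_0}^2h)$. This is a Banach-space isomorphism, since it is the identity on the finite-dimensional factor and $-2\Delta_{g_0}^2\colon C^{k+4,\gamma}_0(M,\R)\to C^{k,\gamma}_0(M,\R)$ is invertible: $\Delta_{g_0}^2$ is a fourth-order, formally self-adjoint elliptic operator whose kernel is exactly the constants, so Schauder theory makes it an isomorphism between the subspaces of functions of vanishing mean. The inverse function theorem then shows that $\Psi$ is a local diffeomorphism around $\Omega_0$, so its image contains a neighborhood of $(\Omega_0,0)$. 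For a q-real class $\Theta\in H^{2,0}_{\mathrm{BC}}(M)$ close to $[\Omega_0]_{\mathrm{BC}}$, its $\Omega_0$-harmonic representative $\alpha_\Theta$ depends linearly---hence continuously---on $\Theta$, and is thus close to $\Omega_0$, the harmonic representative of $[\Omega_0]_{\mathrm{BC}}$; therefore $(\alpha_\Theta,0)$ lies in the image of $\Psi$, producing an HKT metric $\Omega=\alpha_\Theta+\partial\partial_J f$ with $\Psi(\Omega)=(\alpha_\Theta,0)$. This $\Omega$ represents $\Theta$ and satisfies $\hat s^{\mathrm{Ch}}(\Omega)=0$, i.e.\ $s^{\mathrm{Ch}}(\Omega)\equiv c$ for a constant $c$; elliptic bootstrapping in this fourth-order equation upgrades $\Omega$ to a smooth metric.

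It remains to prove $c=0$, for then $\Omega$ is balanced by \cite[Lemma~3.14]{FG} and $\Theta$ lies in the balanced HKT cone. Since $\Omega_0$ is balanced HKT, $(M,I,J,K)$ is an $\mathrm{SL}(n,\H)$-manifold and $\Phi:=\Omega_0^n$ is a nowhere-vanishing holomorphic $(2n,0)$-form; writing the q-positive $(2n,0)$-form $\Omega^n$ as $\Omega^n=f\Phi$ with $f\in C^\infty(M,\R)$, $f>0$, and inserting $\bar\Omega^n=f\bar\Phi$ into \eqref{eq:Lee} (using $\partial\bar\Phi=0$), one obtains $\theta^{1,0}_\Omega=\partial\log f$, whence the Lee form of $\Omega$ is $\theta_\Omega=d\log f$. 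Combining \eqref{eq:sCh} with $\partial_J\partial=-\partial\partial_J$ and with the Gauduchon-type identity $\Lambda_\Omega(\partial\partial_J\psi)=-\Delta_{g_\Omega}\psi-g_\Omega(d\psi,\theta_\Omega)$ used at the end of the proof of Proposition~\ref{TPsi} gives
\[
s^{\mathrm{Ch}}(\Omega)=-2\,\Lambda_\Omega\!\left(\partial\partial_J(\log f)\right)=2\,\Delta_{g_\Omega}(\log f)+2\,|d\log f|^2_{g_\Omega}.
\]
Evaluating this identity at a maximum point of $\log f$---where $d\log f=0$ and $\Delta_{g_\Omega}(\log f)\ge 0$---forces $c\ge 0$, and at a minimum point $c\le 0$; hence $c=0$, so $s^{\mathrm{Ch}}(\Omega)\equiv 0$ and $\Omega$ is a balanced HKT representative of $\Theta$. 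Thus a neighborhood of $[\Omega_0]_{\mathrm{BC}}$ consists of classes admitting balanced HKT representatives, which proves that the balanced HKT cone is open in the HKT cone.

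The step I expect to be the main obstacle is the analytic framework of the first two paragraphs: choosing the H\"older exponents, checking that $\Psi$ is a $C^1$ (indeed smooth) map between the chosen Banach spaces so that the inverse function theorem applies, verifying that the form it produces is a genuine (in particular q-positive) HKT metric, and running the elliptic regularity step. By contrast, the linearization $T_{\Omega_0}\Psi$ is already supplied by Proposition~\ref{TPsi}, the maximum-principle argument for $c=0$ is short once the $\mathrm{SL}(n,\H)$-structure enters, and the continuity of $\Theta\mapsto\alpha_\Theta$ is immediate from linearity between finite-dimensional spaces.
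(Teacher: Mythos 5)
Your proposal is correct and follows essentially the same route as the paper: the inverse function theorem applied to $\Psi$ with the linearization from Proposition~\ref{TPsi}, elliptic regularity, and the reduction to ``constant Chern--scalar curvature on a compact $\mathrm{SL}(n,\H)$-manifold implies balanced.'' The only differences are cosmetic --- you work in H\"older rather than Sobolev spaces, and you spell out via the maximum principle (writing $\Omega^n=f\Phi$) the final step that the paper handles by citing the local formula $s^{\mathrm{Ch}}(\Omega)=-\Delta^{\mathrm{Ch}}_\Omega\log(\Omega^n\wedge\bar\Omega^n/\Omega_0^n\wedge\bar\Omega_0^n)$ and \cite{FG}.
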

\begin{proof}
Pick any balanced HKT structure $(g_0,\Omega_0)$ on $(M,I,J,K)$, which, in particular, must be a $\mathrm{SL}(n,\H)$-manifold. Consider the following extension of $\Psi$
\[
\Psi \colon \mathcal{M}_{\mathrm{HKT}}^{k+4} \to \mathcal{H}^{2,0}_\R(M,\Omega_0)\oplus W^k_0(M,\R)
\]
where $\mathcal{M}_{\mathrm{HKT}}^{k+4}$ denotes the Banach manifold of HKT metrics of bounded Sobolev norm of order $k+4$. For $k$ large enough $\mathcal{M}_{\mathrm{HKT}}^{k+4}$ is contained in the space of all HKT metrics of regularity $C^4$.

By Proposition \ref{TPsi} the kernel of $T_{\Omega_0}\Psi\colon \mathcal{H}^{2,0}_\R(M,\Omega_0)\oplus W^{k+4}_0(M,\R) \to \mathcal{H}^{2,0}_\R(M,\Omega_0)\oplus W^k_0(M,\R)$ consists of those pairs $(\beta,h)$ such that
\[
\left(\beta,-2\Delta_{g_0}^2h\right)=(0,0).
\]
Ellipticity of $\Delta_{g_0}^2$ implies that $h$ must be smooth, but then the maximum principle forces $h$ to be constant, because $M$ is compact and so $h=0$ by the normalization condition. Hence $T_{\Omega_0}\Psi$ is injective. Surjectivity also follows from the fact that $\Delta_{g_0}^2$ is an isomorphism between the spaces $W^{k+4}_0(M,\R)$ and $W^k_0(M,\R)$. 

By the inverse function theorem for Banach manifolds $\Psi$ is an isomorphism between an open neighborhood of $\Omega_0$ in $\mathcal{M}_{\mathrm{HKT}}^{k+4}$ onto an open neighborhood of $(\Omega_0,0)$ in $\mathcal{H}^{2,0}_{\R}(M,\Omega_0)\oplus W^k_0(M,\R)$. Therefore, there exists $\epsilon>0$ small enough such that for every $\beta \in \mathcal{H}^{2,0}_{\R}(M,\Omega_0) $ with $\|\beta- \Omega_0 \| \leq \epsilon $ the equation $\Psi(\Omega)=(\beta,0)$ has a solution in $\mathcal{M}_{\mathrm{HKT}}^{k+4}$. This means that the Chern-scalar curvature of $\Omega$ is constant. Since locally we can write 
\[
s^{\mathrm{Ch}}(\Omega)=-\Delta^{\mathrm{Ch}}_\Omega \log \left( \frac{\Omega^n\wedge \bar \Omega^n}{\Omega^n_0\wedge \bar \Omega^n_0} \right)
\]
standard elliptic theory reveals that $\Omega $ is smooth; we refer to \cite{LBS} for the details. Being a HKT metric with constant Chern-scalar curvature on a compact $\mathrm{SL}(n,\H)$-manifold, $\Omega$ is balanced.
\end{proof}

\section{Harmonic forms}\label{Sec:harmonic}

\begin{prop}\label{p0even}
Let $(M,I,J,K,g,\Omega)$ be a compact balanced HKT manifold. If $\alpha \in \Lambda^{p,0}_I(M)$ is $\Delta_\partial$-harmonic then so is $J\bar \alpha$.
\end{prop}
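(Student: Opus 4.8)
The plan is to show that the operation $\alpha \mapsto J\bar\alpha$ commutes (up to sign) with the Laplacian $\Delta_\partial$ on $(p,0)$-forms when the metric is balanced HKT, so that $J\bar\alpha$ lies in the kernel whenever $\alpha$ does. On a compact manifold, $\Delta_\partial$-harmonicity of a $(p,0)$-form $\alpha$ is equivalent to $\partial\alpha=0$ and $\partial^*\alpha=0$; indeed $\partial\alpha\in\Lambda^{p+1,0}_I(M)$ and $\partial^*\alpha\in\Lambda^{p-1,0}_I(M)$ live in complementary bidegrees, so $0=\langle\Delta_\partial\alpha,\alpha\rangle=\|\partial\alpha\|^2+\|\partial^*\alpha\|^2$ forces both to vanish. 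Hence it suffices to prove: if $\partial\alpha=\partial^*\alpha=0$ then $\partial(J\bar\alpha)=0$ and $\partial^*(J\bar\alpha)=0$.

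The key algebraic input is the interplay between $J$, conjugation, and the Dolbeault operators: since $\partial_J=J\bar\partial J^{-1}$, applying $J$ to a conjugated equation converts $\bar\partial$-statements into $\partial_J$-statements. Concretely, from $\partial\alpha=0$ one gets $\bar\partial\bar\alpha=0$ by conjugation, hence $\partial_J(J\bar\alpha)=J\bar\partial J^{-1}(J\bar\alpha)=J\bar\partial\bar\alpha=0$. Similarly, from $\partial^*\alpha=0$ and the fact (recalled in the excerpt, from \cite{GTar}) that on a compact balanced HKT manifold the adjoints with respect to $*$ and $\star_\Phi$ coincide and that $\Delta_\partial=\Delta_{\partial_J}$, one deduces $\partial_J^*(J\bar\alpha)=0$ by the same conjugate-then-apply-$J$ maneuver applied to $\bar\partial^*\bar\alpha=0$. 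So $J\bar\alpha$ is annihilated by $\partial_J$ and $\partial_J^*$, i.e.\ it is $\Delta_{\partial_J}$-harmonic; since $\Delta_\partial=\Delta_{\partial_J}$ on a compact balanced HKT manifold, $J\bar\alpha$ is $\Delta_\partial$-harmonic as claimed.

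The one point requiring care — and what I expect to be the main obstacle — is making precise the claim that conjugation and the action of $J$ interact correctly with the \emph{adjoint} operators, and that $\partial^*$-closedness of $\alpha$ really does transfer to $\bar\partial^*$-closedness of $\bar\alpha$ and then to $\partial_J^*$-closedness of $J\bar\alpha$. The cleanest route is to use $\star_\Phi$: for $(p,0)$-forms one has $\partial^{\star_\Phi}=-\star_\Phi\partial\,\star_\Phi$ and $\partial_J^{\star_\Phi}=-\star_\Phi\partial_J\,\star_\Phi$, and one checks that $\star_\Phi$ intertwines the map $\alpha\mapsto J\bar\alpha$ with itself up to the conjugation $\Phi\leftrightarrow\bar\Phi$, so that $\partial^{\star_\Phi}\alpha=0$ is equivalent to $\partial_J^{\star_\Phi}(J\bar\alpha)=0$ (using $\partial_J = \overline{\,\cdot\,}\circ\partial\circ\overline{\,\cdot\,}$ appropriately composed with $J$). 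Combined with the identification $\partial^*=\partial^{\star_\Phi}$, $\partial_J^*=\partial_J^{\star_\Phi}$ on $(p,0)$-forms in the balanced HKT case, this gives exactly $\partial^*\alpha=0\iff\partial_J^*(J\bar\alpha)=0$. Assembling the four vanishing statements and invoking $\Delta_\partial=\Delta_{\partial_J}$ completes the proof. An alternative to the $\star_\Phi$ bookkeeping is to observe directly that $\overline{\,\cdot\,}$ is an isometry sending $\partial$ to $\bar\partial$ and hence $\partial^*$ to $\bar\partial^*$, and that $J$ is an isometry sending $\bar\partial$ to $\partial_J$ and $\bar\partial^*$ to $\partial_J^*$; I would present whichever of the two is shorter after checking signs.
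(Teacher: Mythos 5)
Your proof is correct and follows essentially the same route as the paper: both rest on the relation $\partial_J = J\bar\partial J^{-1}$ together with the coincidence $\Delta_\partial = \Delta_{\partial_J}$ on compact balanced HKT manifolds, applied to the pair $(\partial\alpha=0,\ \partial^*\alpha=0)$. The only difference is in the coclosedness step: the paper computes $\partial^*(J\bar\alpha)=0$ directly using the identities $[\Lambda,\partial]=-\partial_J^*$ and $[\partial_J,\Lambda]=-\partial^*$, whereas you transfer $\partial^*\alpha=0$ to $\partial_J^*(J\bar\alpha)=0$ by observing that conjugation and $J$ are $L^2$-isometries intertwining the operators and their adjoints, and then invoke $\Delta_\partial=\Delta_{\partial_J}$ a second time --- an equally valid and arguably cleaner piece of bookkeeping.
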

\begin{proof}
We have $\Delta_\partial=\Delta_{\partial_J}$, hence $\alpha$ is $\partial$, $\partial^*$, $\partial_J$ and $\partial_J^*$-closed. Clearly
\[
\partial J\bar \alpha=J^{-1}J\partial J\bar \alpha =-JJ\partial J^{-1}\bar \alpha=-J\bar \partial_J \bar \alpha=0,
\]
and using the identities \eqref{HKTidentities}, together with the fact $\Lambda J=J\bar \Lambda$ we also conclude
\begin{eqnarray*}
\partial^* J\bar \alpha&=&\Lambda (\partial_J J\bar \alpha) -\partial_J \Lambda (J\bar \alpha),\\
&=& \Lambda (J\bar \partial \bar \alpha)-J\bar \partial \bar \Lambda (\bar \alpha),\\
&=& -J\bar \Lambda(\bar \partial \bar \alpha)-J\bar \partial_J^* \bar \alpha,\\
&=&0,
\end{eqnarray*}
hence $J\bar \alpha$ is $\Delta_\partial$-harmonic.
\end{proof}

\begin{cor}
Let $(M,I,J,K,g)$ be a compact balanced HKT manifold. Then $\dim H^{2p-1,0}_\partial(M) \equiv 0 \mod 2$ for all $p=1,\dots,n$ and $H^{2p,0}_\partial(M)\neq 0$ for $p=0,\dots,n$.
\end{cor}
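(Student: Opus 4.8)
The plan is to exploit Proposition \ref{p0even} together with the basic fact (valid on any compact hyperhermitian manifold, but needed here only in the balanced HKT case) that $\alpha\mapsto J\bar\alpha$ defines a conjugate-linear map on $\Lambda^{p,0}_I(M)$. First I would verify that on $(p,0)$-forms this operation squares to a scalar: using $J^2=-\mathrm{id}$ and the convention $J\alpha=(-1)^p\alpha(J\cdot,\dots,J\cdot)$, one gets $J\overline{J\bar\alpha}=(-1)^p J\cdot J^{-1}\alpha\cdot$-type bookkeeping, which yields $J\overline{J\bar\alpha}=(-1)^p\alpha$. Hence on $\Lambda^{2p-1,0}_I(M)$ the map $\alpha\mapsto J\bar\alpha$ is a conjugate-linear involution up to sign with square $-\mathrm{id}$, i.e. a quaternionic-type structure, while on $\Lambda^{2p,0}_I(M)$ it squares to $+\mathrm{id}$, i.e. a real structure.

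For the first assertion, by Hodge theory $H^{2p-1,0}_\partial(M)\cong\mathcal H^{2p-1,0}_\partial(M)$, the space of $\Delta_\partial$-harmonic $(2p-1,0)$-forms, which is a finite-dimensional complex vector space. By Proposition \ref{p0even} the map $\alpha\mapsto J\bar\alpha$ preserves this space, and by the computation above it is a conjugate-linear endomorphism squaring to $-\mathrm{id}$. A complex vector space carrying such a structure is a module over the quaternions, hence has even complex dimension; this gives $\dim_\C H^{2p-1,0}_\partial(M)\equiv 0\bmod 2$ for $p=1,\dots,n$.

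For the second assertion, I would simply observe that $\Omega^p\in\Lambda^{2p,0}_I(M)$ is a natural candidate harmonic representative. Since the structure is balanced HKT, $\theta=0$, so by \eqref{eq:del*Omega} we have $\partial^*\Omega=-J\theta^{0,1}=0$, and of course $\partial\Omega=0$ since $\Omega$ is HKT; moreover $\partial_J\Omega=0$ and $\partial_J^*\Omega=0$ by the same reasoning applied to the conjugate Dolbeault operator together with $\Delta_\partial=\Delta_{\partial_J}$. Then I would argue that $\Omega^p$ is $\partial$-closed (as a power of a $\partial$-closed form) and $\partial^*$-closed: using the HKT identities \eqref{HKTidentities}, $\partial^*\Omega^p=[\partial_J,\Lambda]\Omega^p$, and since $\Lambda\Omega^p = $ (constant)$\,\Omega^{p-1}$ with $\partial_J\Omega=0$, together with $\partial_J\Omega^p=0$, one gets $\partial^*\Omega^p=0$; hence $\Omega^p$ is $\Delta_\partial$-harmonic. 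Finally $\Omega^p\neq 0$ because $\Omega$ is non-degenerate (q-positive), so $[\Omega^p]$ is a nonzero class in $H^{2p,0}_\partial(M)$, giving $H^{2p,0}_\partial(M)\neq 0$ for $p=0,\dots,n$ (the case $p=0$ being trivial since constants are harmonic).

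The main obstacle I anticipate is bookkeeping: getting the sign in $J\overline{J\bar\alpha}=(-1)^p\alpha$ exactly right with the stated conventions, and making sure that the claim ``$\partial^*\Omega^p=0$'' is justified cleanly rather than by brute-force local computation — the slick route is via \eqref{HKTidentities} and the already-established identities $\partial^*\Omega=0$, $\partial_J^*\Omega=0$, plus the primitive decomposition, which reduces everything to the fact that $\Omega$ itself is $\Delta_\partial$-harmonic (already noted in the paper, since a balanced HKT metric is Bott--Chern harmonic) and Lefschetz commutation relations on forms of type $(q,0)$.
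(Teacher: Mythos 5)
Your proposal is correct and follows essentially the same route as the paper: the odd-degree statement comes from Proposition \ref{p0even} together with the observation that $\alpha\mapsto J\bar\alpha$ is conjugate-linear with square $(-1)^p$, hence a quaternionic structure on $\mathcal H^{2p-1,0}_\partial(M)$ (you spell out the module-over-$\H$ argument that the paper compresses into ``$\alpha$ and $J\bar\alpha$ are independent''), and the even-degree statement comes from the harmonicity of $\Omega^p$. The only cosmetic difference is that you verify $\partial^*\Omega^p=0$ via the commutator identity $[\partial_J,\Lambda]=-\partial^*$ and $\partial_J\Omega=0$, whereas the paper computes $*\,\Omega^p$ explicitly as a multiple of $\Omega^{n-p}\wedge\bar\Omega^n$ and uses $\partial\bar\Omega^n=\theta^{1,0}\wedge\bar\Omega^n=0$; both are valid in the balanced case.
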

\begin{proof}
The first assertion follows from Proposition \ref{p0even} together with the fact that $\alpha$ and $J\bar \alpha$ are independent if the form $\alpha \in \Lambda^{2p-1,0}_I(M)$ is of odd degree. The second part is a consequence of the fact that the powers $\Omega^p$ are $\Delta_\partial$-harmonic for every $p=0,\dots,n$, indeed they are evidently $\partial$-closed and also
\[
\partial^* \Omega^p=-*\partial*\Omega^p=-p!*\partial \left(\frac{\Omega^{n-p}\wedge \bar \Omega^n}{(n-p)!n!}\right)=0. \qedhere
\]
\end{proof}

\begin{prop}\label{parallel-harmonic}
Let $(M,I,J,K,\Phi,g)$ be a compact HKT $\mathrm{SL}(n,\H)$-manifold and $\nabla$ the Obata connection on it. If $\alpha \in \Lambda^{1,0}_I(M)$ is $\nabla$-parallel it is $\Delta_{\partial,\Phi}$-harmonic, where $\Delta_{\partial,\Phi}:=\partial\partial^{\star_\Phi}+\partial^{\star_\Phi}\partial$.
\end{prop}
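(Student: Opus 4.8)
The plan is to show that a $\nabla$-parallel form $\alpha \in \Lambda^{1,0}_I(M)$ is annihilated by $\partial$, $\partial_J$, $\partial^{\star_\Phi}$ and $\partial_J^{\star_\Phi}$, from which $\Delta_{\partial,\Phi}$-harmonicity follows immediately. Since the Obata connection $\nabla$ is torsion-free and preserves $(I,J,K)$, it induces the splitting of $d$ into the four first-order operators $\partial$, $\bar\partial$, $\partial_J$, $\bar\partial_J$ acting on forms of type $(p,0)$ with respect to $I$; more precisely, for a $(1,0)$-form $\alpha$ one has $\partial\alpha$ and $\partial_J\alpha$ expressed purely in terms of $\nabla\alpha$ and the vanishing torsion (the usual fact that for a torsion-free connection $d\alpha$ is the antisymmetrization of $\nabla\alpha$, together with the bidegree decomposition with respect to $I$ and $J$). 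Hence $\nabla\alpha = 0$ forces $\partial\alpha = \partial_J\alpha = 0$ at once.

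Next I would handle the adjoints. The operator $\star_\Phi$ is built out of the metric and the Obata-parallel holomorphic volume form $\Phi \in \Lambda^{2n,0}_I(M)$; on a compact HKT $\mathrm{SL}(n,\H)$-manifold one has $\partial\Phi = 0$ (indeed $\Phi$ is holomorphic, and this is exactly the content of \eqref{eq:Lee} combined with the $\mathrm{SL}(n,\H)$-condition). The key computational point is a Weitzenböck-type identity, or more elementarily the formula $\partial^{\star_\Phi}\alpha = -\star_\Phi\,\partial\,\star_\Phi\,\alpha$: since $\star_\Phi$ intertwines the metric contraction with wedging against $\Phi$, and both $g$ and $\Phi$ are $\nabla$-parallel, the form $\star_\Phi\alpha \in \Lambda^{2n-1,0}_I(M)$ is again $\nabla$-parallel; applying the previous paragraph to this parallel $(2n-1,0)$-form (the argument that $d$ is the antisymmetrized covariant derivative works in any degree) gives $\partial\,\star_\Phi\alpha = 0$, hence $\partial^{\star_\Phi}\alpha = 0$. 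The same reasoning applied with $\partial_J$ in place of $\partial$ gives $\partial_J^{\star_\Phi}\alpha = 0$, although for the statement as phrased only $\partial$ and $\partial^{\star_\Phi}$ are needed.

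The main obstacle is making rigorous the claim that $\star_\Phi$ sends $\nabla$-parallel forms to $\nabla$-parallel forms. This rests on two ingredients: that $\Phi$ is $\nabla$-parallel (true by definition of the $\mathrm{SL}(n,\H)$-condition, the holonomy lying in $\mathrm{SL}(n,\H)$) and that the metric $g$ is $\nabla$-parallel. The latter is \emph{not} automatic — the Obata connection need not be metric for an arbitrary hyperhermitian metric. However, one does not actually need $\nabla g = 0$; it suffices that $\star_\Phi$ is natural with respect to $\nabla$, i.e. that the pointwise algebraic operation $\star_\Phi$ commutes with parallel transport. Here I would instead argue more carefully: $\star_\Phi\alpha$ is characterized by $\alpha \wedge \star_\Phi\beta \wedge \bar\Phi = g(\alpha,\beta)\,\tfrac{\Omega^n}{n!}\wedge\bar\Phi$, and the right-hand side, viewed as a function times a fixed form, has controlled covariant derivative; differentiating the defining relation and using $\nabla\alpha = 0$, $\nabla\Phi = 0$ reduces the problem to the parallelism of the function $g(\alpha,\beta)$ for parallel $\beta$ — equivalently to $\nabla g = 0$ contracted against parallel fields, which does hold since $\nabla$ preserves the hypercomplex structure and $\alpha$, $\beta$ are parallel of type $(1,0)$. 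This is the delicate point and where the bulk of the verification lies; once it is in place, the rest is the formal four-line argument above, concluding with $\Delta_{\partial,\Phi}\alpha = \partial\partial^{\star_\Phi}\alpha + \partial^{\star_\Phi}\partial\alpha = 0$.
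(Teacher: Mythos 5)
The first half of your argument (that $\nabla$-parallel implies $d$-closed, hence $\partial$-closed, by antisymmetrizing $\nabla\alpha$ for the torsion-free Obata connection) is correct and is exactly what the paper does. The second half has a genuine gap: your route to $\partial^{\star_\Phi}\alpha=0$ goes through the claim that $\star_\Phi\alpha$ is again $\nabla$-parallel, and that claim is false in general. On $(1,0)$-forms one has the pointwise identity $\star_\Phi\alpha=J\bar\alpha\wedge\frac{\Omega^{n-1}}{(n-1)!}$; the factor $J\bar\alpha$ is indeed parallel (since $\nabla J=0$), but $\Omega$ is \emph{not} Obata-parallel unless the metric is hyperk\"ahler (a torsion-free connection preserving $g,I,J,K$ would be the Levi-Civita connection with holonomy in $\mathrm{Sp}(n)$). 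For the same reason your fallback --- that $g(\alpha,\beta)$ is constant for parallel $\alpha,\beta$ because ``$\nabla$ preserves the hypercomplex structure'' --- does not follow: the Obata connection is not metric, so $X\bigl(g(\alpha,\beta)\bigr)=(\nabla_Xg^{-1})(\alpha,\beta)$ has no reason to vanish. Indeed, the paper needs the full strength of Theorem \ref{dualhol} (and the balanced hypothesis) to show that parallel fields have constant length, so this is certainly not a formal consequence of parallelism in the general HKT $\mathrm{SL}(n,\H)$ setting.

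The telltale sign is that your proposal never uses the HKT condition $\partial\Omega=0$ (only $\partial\Phi=0$, which is about the parallel volume form, not the metric). The correct argument, as in the paper, is to compute directly:
\[
\partial^{\star_\Phi}\alpha=-\star_\Phi\,\partial\!\left(J\bar\alpha\wedge\frac{\Omega^{n-1}}{(n-1)!}\right)
=-\star_\Phi\!\left(\partial(J\bar\alpha)\wedge\frac{\Omega^{n-1}}{(n-1)!}-J\bar\alpha\wedge\partial\frac{\Omega^{n-1}}{(n-1)!}\right)=0,
\]
where the first term vanishes because $J\bar\alpha$ is parallel (hence closed) and the second because $\partial\Omega=0$. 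So $\star_\Phi\alpha$ is $\partial$-closed without being parallel, and that is all the proposition needs. With this replacement your outline closes up; as you note, only $\partial\alpha=0$ and $\partial^{\star_\Phi}\alpha=0$ are required for $\Delta_{\partial,\Phi}$-harmonicity.
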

\begin{proof}
If $\alpha \in \Lambda^{1,0}_I(M)$ is parallel also $J \bar \alpha$ is because $\nabla$ preserves $J$. Furthermore, as $\nabla$ is torsion-free, this implies that $\alpha$ and $J \bar \alpha$ are closed. In particular $\partial \alpha =0$ and $\partial J \bar \alpha =0$. We then conclude
\begin{eqnarray*}
\partial^{\star_\Phi} \alpha&=&-\star_{\Phi} \partial \star_{\Phi} \alpha,\\
&=& -\star_{\Phi}\, \partial \left(J \bar \alpha \wedge \frac{\Omega^{n-1}}{(n-1)!} \right),\\
&=& 0,
\end{eqnarray*}
and so $\alpha$ is $\Delta_{\partial,\Phi}$-harmonic.
\end{proof}

Example \ref{ex:balancedHKT} has the purpose of showing that the converse of Proposition \ref{parallel-harmonic} does not hold as, even in the balanced HKT case, a $(1,0)$-form can be $\Delta_\partial$-harmonic without being closed.

\begin{ex}\label{ex:balancedHKT}
We consider an example of a compact hypercomplex nilmanifold that is balanced HKT. The structure equations of the associated Lie algebra are given by
$$\left\{
    \begin{array}{llll}
        de^1=de^2=de^3=de^4=de^5=0,  \\
        de^6=e^1\wedge e^2+e^3\wedge e^4,\\
        de^7=e^1\wedge e^3+e^4\wedge e^2,\\
        de^8=e^1\wedge e^4+e^2\wedge e^3.
    \end{array}
\right.
$$
The hypercomplex structure is given by
$$
Ie_1=-e_2,\quad Ie_3=e_4,\quad Ie_5=\frac{1}{2}e_6,\quad Ie_7=e_8, $$
$$
Je_1=-e_3,\quad J e_2=-e_4,\quad Je_5=\frac{1}{2}e_7,\quad Je_6=-e_8,
$$
and the balanced HKT metric $\Omega$ is given by
$$
\Omega=-\phi^1\wedge\phi^2+2\phi^3\wedge\phi^4,
$$
where
$$
\phi^1=e^1+\sqrt{-1}e^2,\quad \phi^2=e^3-\sqrt{-1}e^4,\quad \phi^3=e^5-2\sqrt{-1}e^6,\quad \phi^4=e^7-\sqrt{-1}e^8.
$$
It is easy to check that
$$
\partial\phi^i=\partial_J\phi^i=0,\mbox{ for any } i.
$$
The dimension of $\Delta_\partial$-harmonic $(1,0)$-forms with respect to $I$ is thus $4$, but, for instance, $\phi^3$ is not closed. 
\end{ex}

\section{Hyperholomorphic \texorpdfstring{$(1,0)$}--vector fields}\label{Sec:1,0}
Let $(M,I,J,K,g,\Omega)$ be a hyperhermitian manifold. In this section we investigate the condition for a vector field of type $(1,0)$ with respect to $I$ to be hyperholomorphic. A vector field $Z\in T^{1,0}_I(M)$ is called \textbf{hyperholomorphic} if $\mathcal{L}_ZI=\mathcal{L}_ZJ=\mathcal{L}_ZK=0$. We start with the following characterization:

\begin{prop}\label{parallel-obata}
    Let $(M,I,J,K)$ be a hypercomplex manifold. Consider a vector field $X\in TM$ and denote $Z=X^{1,0}\in T^{1,0}_I(M)$ its $(1,0)$-part with respect to $I$. Then the following are equivalent:
    \begin{enumerate}
    \item $Z$ is hyperholomorphic;
    \item $X$, $IX$, are (real) hyperholomorphic;
    \item $J\bar Z$ is hyperholomorphic;
    \item $Z$ is parallel with respect to the Obata connection;
    \item $X$ is parallel with respect to the Obata connection.
\end{enumerate}

\end{prop}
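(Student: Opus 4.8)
The plan is to prove the chain of equivalences by establishing a cycle, using the key fact that the Obata connection $\nabla$ is the unique torsion-free connection preserving $I$, $J$, $K$. The conceptual heart of the matter is that for a torsion-free connection $\nabla$ preserving a tensor $A$, one has the identity $\mathcal{L}_X A = -\nabla_X A + A\circ\nabla X - \nabla X\circ A$ (suitably interpreted), which since $\nabla A=0$ reduces to $\mathcal{L}_X A = [A,\nabla X]$ as a fibrewise endomorphism identity; here $\nabla X\colon TM\to TM$ denotes the endomorphism $Y\mapsto\nabla_Y X$. Applying this to $A=I,J,K$ shows that $X$ is (real) hyperholomorphic if and only if $\nabla X$ commutes with each of $I$, $J$, $K$. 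I would first record this identity as the main computational lemma.

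Next I would run the equivalences. For $(4)\Leftrightarrow(5)$: since $I$ is parallel, $X$ is parallel iff $Z=X^{1,0}=\tfrac12(X-\sqrt{-1}\,IX)$ is parallel (and symmetrically one recovers $X$ from $Z$); this is immediate. For $(5)\Rightarrow(2)$: if $\nabla X=0$ then the endomorphism $\nabla X$ is zero, hence commutes with $I,J,K$, so $X$ is hyperholomorphic by the lemma, and likewise $IX$ since $\nabla(IX)=I\nabla X=0$. For $(2)\Rightarrow(1)$: if $X$ and $IX$ are real hyperholomorphic then so is any real linear combination, and $\mathcal{L}$ is $\C$-linear after complexification, so $Z=\tfrac12(X-\sqrt{-1}\,IX)$ satisfies $\mathcal{L}_Z I=\mathcal{L}_Z J=\mathcal{L}_Z K=0$, i.e. $Z$ is hyperholomorphic; conversely $(1)\Rightarrow(2)$ by taking real and imaginary parts. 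The equivalence $(1)\Leftrightarrow(3)$ follows because $J\bar Z = J\overline{X^{1,0}}$ is, up to reality, just $(JX)^{1,0}$ or an expression in $X, IX, JX$ built from the quaternionic relations; more cleanly, if $Z$ is parallel then $J\bar Z$ is parallel since $\nabla$ preserves $J$ and complex conjugation intertwines with $\nabla$, and then one uses $(4)\Leftrightarrow(1)$ again. Finally $(1)\Rightarrow(4)$: this is the one genuine step. If $Z$ is hyperholomorphic then $X$ is real hyperholomorphic, so $S:=\nabla X$ commutes with $I$, $J$, $K$, hence with all of $\mathrm{Im}\,\H$ acting pointwise; one must then argue that such an $S$ forces $\nabla X=0$. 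The point is that a priori $S$ need not vanish — it lies in the commutant, which is $\mathrm{GL}(n,\H)$-valued, not zero. So the implication $(1)\Rightarrow(4)$ cannot be purely algebraic; it must use that $\nabla$ is \emph{torsion-free} a second time, or a Bochner/compactness-free argument.

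The main obstacle is exactly this last implication $(1)\Rightarrow(4)$: showing that a hyperholomorphic $(1,0)$-field is Obata-parallel. I expect the resolution to be the following. Writing the flow of $X$ (or $\mathrm{Re}\,Z$) by diffeomorphisms $\phi_t$, the condition $\mathcal{L}_X I=\mathcal{L}_X J=\mathcal{L}_X K=0$ says $\phi_t$ preserves the hypercomplex structure, hence preserves its Obata connection (by uniqueness), hence $\mathcal{L}_X\nabla=0$, i.e. $X$ is an \emph{affine} Killing field for $\nabla$. For an affine field of a torsion-free connection one has the classical identity $\nabla_Y\nabla_Z X - \nabla_{\nabla_Y Z} X = R(X,Y)Z$, i.e. $\nabla(\nabla X) = -R(\cdot,\cdot)X\circ(\,\cdot\,)$ in an appropriate sense, which expresses the second covariant derivative of $X$ purely in terms of the curvature. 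That by itself does not give $\nabla X=0$ on a general manifold, so I suspect what the authors actually prove here is the implication \emph{with} the torsion-free + hypercomplex structure producing enough: namely $\nabla X$ commutes with $I,J,K$ (shown above) \emph{and} is $\nabla$-parallel modulo curvature — and the combination, together with $\mathrm{Hol}(\nabla)\subseteq\mathrm{GL}(n,\H)$ and the fact that an affine transformation fixing a hypercomplex structure is determined by its $1$-jet, yields parallelism. If a compactness-free statement as claimed genuinely holds, the cleanest route is: hyperholomorphic $\Rightarrow$ $\phi_t\in\mathrm{Aut}(I,J,K)$ $\Rightarrow$ $\phi_t$ affine for $\nabla$; then for $Z$ of type $(1,0)$, $\nabla Z$ is an endomorphism-valued object that is simultaneously $I$-, $J$-, $K$-linear (hence $\H$-linear) and satisfies $\bar\partial$-type holomorphicity, and a dimension/type count in $\Lambda^{1,0}_I\otimes T^{1,0}_I$ combined with parallel transport along the holonomy forces it to vanish. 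I would flag this as the step to be written carefully, and everything else as bookkeeping with the endomorphism identity $\mathcal{L}_X A=[A,\nabla X]$.
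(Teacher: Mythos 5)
Your reduction of everything to the endomorphism identity $\mathcal{L}_X L=[L,\nabla X]$ (valid because $\nabla$ is torsion-free and $\nabla L=0$) is exactly the right starting point, and the equivalences $(1)\Leftrightarrow(2)$, $(4)\Leftrightarrow(5)$ and $(5)\Rightarrow(2)$ are handled correctly. But there is a genuine gap at the one implication you yourself flag, $(2)\Rightarrow(5)$, and moreover your diagnosis of it is wrong: you claim it ``cannot be purely algebraic'' because $S:=\nabla X$ only lands in the commutant of $\{I,J,K\}$, which is the nonzero algebra $\mathfrak{gl}(n,\H)$. That analysis only uses the hyperholomorphicity of $X$. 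The hypothesis (2) also gives you that $IX$ is hyperholomorphic, i.e.\ that $\nabla(IX)=I\circ S$ commutes with $I,J,K$ as well, and \emph{that} is what kills $S$ algebraically: from $SJ=JS$ and $(IS)J=J(IS)$ you get $IJS=ISJ=JIS=-IJS$, hence $KS=IJS=0$, hence $S=0$ since $K$ is invertible. Equivalently, in the paper's notation,
\[
\nabla_{KY}X=K\nabla_YX=-J\nabla_Y(IX)=-\nabla_{JY}(IX)=-I\nabla_{JY}X=-\nabla_{KY}X ,
\]
so $\nabla X=0$. The whole proposition is therefore elementary algebra with the torsion-free identity; none of the machinery you invoke as a possible rescue (affine Killing fields, $\mathcal{L}_X\nabla=0$, the curvature identity $\nabla\nabla X=R(X,\cdot)\cdot$, holonomy and $1$-jets of affine transformations) is needed, and as written your argument for $(1)\Rightarrow(4)$ is not a proof but a sketch of a strategy that you correctly suspect would not close on a general (possibly non-compact) hypercomplex manifold.

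A secondary, minor point: you derive $(1)\Leftrightarrow(3)$ from parallelism of $Z$, which is fine once $(1)\Leftrightarrow(4)$ is in place, but since that was the step you left open, your treatment of (3) is also left hanging. (The paper instead proves $\mathcal{L}_{J\bar Z}I=\mathcal{L}_{J\bar Z}J=0$ by a direct Lie-derivative computation from (2), independently of the Obata connection; either route works once the gap above is filled, and note that applying $W\mapsto J\bar W$ twice returns $-Z$, which gives the converse direction for free.)
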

\begin{proof}
Suppose $Z$ is hyperholomorphic, we obtain that
$$
\mathcal{L}_XI=\mathcal{L}_XJ=\mathcal{L}_XK=\mathcal{L}_{IX}I=\mathcal{L}_{IX}J=\mathcal{L}_{IX}K=0.
$$
Using the above identities and integrability, we compute
\begin{eqnarray*}
\mathcal{L}_{J\overline{Z}}I&=&\mathcal{L}_{JX}I-\sqrt{-1}\mathcal{L}_{KX}I,\\
&=&\left(\mathcal{L}_{JX}J\right)K+J\left(\mathcal{L}_{JX}K\right)-\sqrt{-1}\left(\mathcal{L}_{KX}J\right)K-\sqrt{-1}J\left(\mathcal{L}_{KX}K\right),\\
&=&J\left(\mathcal{L}_{KIX}K\right)+\sqrt{-1}\left(\mathcal{L}_{JIX}J\right)K,\\
&=&JK\left(\mathcal{L}_{IX}K\right)+\sqrt{-1}J\left(\mathcal{L}_{IX}J\right)K,\\
&=&0.
\end{eqnarray*}
On the other hand, since $\mathcal{L}_XJ=\mathcal{L}_{IX}J=0$, we have that $\mathcal{L}_{J\overline{Z}}J=0.$ Therefore we have established the equivalence of the first three assertions.

As $\nabla$ preserves $I$, $J$ and $K$ it is clear that the last two assertions are equivalent. Furthermore, since $\nabla$ is torsion-free, for every $L\in \{I,J,K\}$ and $Y\in TM$ we have
\begin{eqnarray*}
(\mathcal{L}_XL)(Y)&=&[X,LY]-L[X,Y]\\
&=&\nabla_XLY-\nabla_{LY}X-L\nabla_XY+L\nabla_YX\\
&=&-\nabla_{LY}X+L\nabla_YX
\end{eqnarray*}
therefore, every $\nabla$-parallel vector field is hyperholomorphic. Conversely, if $X$ and $I X$ are hyperholomorphic we have
\[
\nabla_{KY}X=K\nabla_YX=-J\nabla_YIX=-\nabla_{JY}IX=-I\nabla_{JY}X=-\nabla_{KY}X,
\]
for every $Y\in TM$ and so $X$ has to be parallel.
\end{proof}

\begin{cor}
Let $(M,I,J,K)$ be a compact connected hypercomplex manifold with Euler characteristic $\chi\neq 0$. Then, there is no non-trivial hyperholomorphic $(1,0)$-vector field.
\end{cor}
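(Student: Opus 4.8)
The plan is to argue by contradiction, reducing the statement to the Poincaré--Hopf theorem by means of Proposition \ref{parallel-obata}. Suppose that $Z\in T^{1,0}_I(M)$ is a non-trivial hyperholomorphic vector field, and set $X:=Z+\bar Z\in TM$. Since $T^{1,0}_I(M)\cap T^{0,1}_I(M)=0$, the real field $X$ vanishes at a point precisely when $Z$ does, so $X$ is not identically zero; moreover $X^{1,0}=Z$, so $X$ is exactly a field of the type considered in Proposition \ref{parallel-obata}. By the equivalence of assertions (1) and (5) in that proposition, $X$ is parallel with respect to the Obata connection $\nabla$.

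The core observation is then that a non-trivial $\nabla$-parallel vector field on a connected manifold is nowhere vanishing. Indeed, its zero set $\{p\in M : X_p=0\}$ is closed, and it is also open: near any of its points, $X$ coincides with the $\nabla$-parallel transport of its (vanishing) value there, hence is identically zero on a neighborhood. As $M$ is connected and $X\not\equiv 0$, this zero set is empty. Therefore $M$ carries a nowhere-vanishing (in particular, continuous) vector field, and the Poincaré--Hopf theorem forces $\chi(M)=0$, contradicting the hypothesis. Hence no non-trivial hyperholomorphic $(1,0)$-vector field exists.

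I do not expect any serious obstacle here: the argument is essentially immediate once Proposition \ref{parallel-obata} is available. The only two points deserving a line of care are the passage from the complex field $Z$ to a genuine real vector field $X$ with $X^{1,0}=Z$ and $X\not\equiv 0$, and the standard topological fact that the zero locus of a $\nabla$-parallel section is open and closed. One could equivalently note that $X$, $IX$, $JX$, $KX$ are all $\nabla$-parallel and hence yield several pointwise-independent nowhere-zero fields, but a single nowhere-zero field already suffices for the conclusion.
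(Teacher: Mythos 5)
Your argument is correct and is exactly the paper's intended proof: the paper simply cites Proposition \ref{parallel-obata} together with the Poincar\'e--Hopf theorem, and you have filled in the two routine details (passing from $Z$ to the real field $X=Z+\bar Z$ with $X^{1,0}=Z$, and the fact that a non-trivial $\nabla$-parallel field on a connected manifold is nowhere vanishing). Nothing further is needed.
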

\begin{proof}
This is a consequence of Theorem~\ref{parallel-obata} and the Poincaré-Hopf Theorem.
\end{proof}

On a compact HKT $\mathrm{SL}(n,\H)$-manifold, it turns out that the Riemannian dual of a hyperholomorphic vector field has a special Hodge decomposition that can be compared to the Hodge decomposition of a holomorphic vector field on a K\"ahler manifold.

\begin{thm}\label{SL(n,H)-hyperholomorphic}
Let $(M,I,J,K,\Phi,g,\Omega)$ be a compact HKT $\mathrm{SL}(n,\H)$-manifold. Suppose that $Z\in T^{1,0}(M)$ is hyperholomorphic and denote by $\alpha\in \Lambda^{1,0}_I(M)$ the form given by $\alpha=\Omega(Z)=J\left(Z\right)^{\flat}$, where $\flat$ denotes the dual by the metric $g$. Then, 
$$\alpha=\left(\alpha\right)_H+\partial u+\partial_Jv,$$
where $(\cdot)_H$ is the harmonic part with respect to the Laplacian $\Delta_{\partial,\Phi}=\partial\partial^{\star_\Phi}+\partial^{\star_\Phi}\partial$, and $u,v$ are complex-valued functions. 
\end{thm}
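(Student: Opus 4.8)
The plan is to modify $\alpha$ by a $\partial$-exact and a $\partial_J$-exact term, both coming from smooth functions, so as to land in the space of $(1,0)$-forms that are simultaneously $\partial$-closed and $\partial_J$-closed; on a compact HKT $\mathrm{SL}(n,\H)$-manifold such forms turn out to be exactly the $\Delta_{\partial,\Phi}$-harmonic ones. The two inputs will be the quaternionic $\partial\partial_J$-lemma, valid precisely because the manifold is $\mathrm{SL}(n,\H)$ (see \cite{GLV,FG}), and the K\"ahler-type identities relating $\partial$, $\partial_J$, $\Lambda$ and the $\star_\Phi$-adjoints $\partial^{\star_\Phi}$, $\partial_J^{\star_\Phi}$, i.e.\ the counterpart of \eqref{HKTidentities} for the operator $\star_\Phi$ on $\mathrm{SL}(n,\H)$-manifolds (cf.\ \cite{Ver02,LW,GTar}).

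I would begin by recording two facts. Since $\partial\Omega=0$ and $J^{-1}\Omega=\bar\Omega$, one has $\partial_J\Omega=J\bar\partial J^{-1}\Omega=J\,\overline{\partial\Omega}=0$, so $\Omega$ is both $\partial$- and $\partial_J$-closed. Moreover, as $Z$ is hyperholomorphic we have $\mathcal{L}_Z I=0$, whence $\mathcal{L}_Z$ preserves the bidegree with respect to $I$ and commutes with $\partial$ and $\bar\partial$, and since also $\mathcal{L}_Z J=0$ it commutes with $\partial_J=J\bar\partial J^{-1}$ as well. Cartan's formula then gives $\mathcal{L}_Z\Omega=d\alpha+\iota_Z\,d\Omega=\partial\alpha+\bar\partial\alpha+\iota_Z\bar\partial\Omega$ (recall $\alpha=\iota_Z\Omega$ and $\partial\Omega=0$), and comparing types --- $\mathcal{L}_Z\Omega$ is of type $(2,0)$ while $\bar\partial\alpha+\iota_Z\bar\partial\Omega$ is of type $(1,1)$ --- yields $\partial\alpha=\mathcal{L}_Z\Omega$. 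Therefore $\partial_J\partial\alpha=\partial_J\mathcal{L}_Z\Omega=\mathcal{L}_Z\partial_J\Omega=0$, equivalently $\partial\partial_J\alpha=0$; so $\partial\alpha$ is $\partial$-exact and $\partial_J$-closed, while $\partial_J\alpha$ is $\partial_J$-exact and $\partial$-closed.

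Applying the $\partial\partial_J$-lemma, together with its analogue in which $\partial$ and $\partial_J$ are interchanged, I would obtain functions $w,s\in C^\infty(M,\C)$ with $\partial\alpha=\partial\partial_J w$ and $\partial_J\alpha=\partial\partial_J s=-\partial_J\partial s$. Put $\tilde\alpha:=\alpha+\partial s-\partial_J w\in\Lambda^{1,0}_I(M)$; then $\partial\tilde\alpha=\partial\alpha-\partial\partial_J w=0$ and $\partial_J\tilde\alpha=\partial_J\alpha+\partial_J\partial s=0$. Because $\Lambda$ vanishes on $\Lambda^{1,0}_I(M)$ for degree reasons, the K\"ahler-type identities reduce on $\Lambda^{1,0}_I(M)$ to $\partial_J^{\star_\Phi}\eta=-\Lambda\partial\eta$ and $\partial^{\star_\Phi}\eta=\Lambda\partial_J\eta$; the first shows every $\partial$-closed $(1,0)$-form is $\partial_J^{\star_\Phi}$-closed, and the second applied to $\tilde\alpha$ shows $\partial^{\star_\Phi}\tilde\alpha=\Lambda\partial_J\tilde\alpha=0$, so that $\tilde\alpha$ is $\Delta_{\partial,\Phi}$-harmonic. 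Consequently $\partial_J w$ is $\Delta_{\partial,\Phi}$-orthogonal to the harmonic $(1,0)$-forms, since for any harmonic $e$ one has $\langle\partial_J w,e\rangle_{\star_\Phi}=\langle w,\partial_J^{\star_\Phi}e\rangle_{\star_\Phi}=0$, and $\partial s$, being $\partial$-exact, is likewise orthogonal to them; hence taking harmonic parts in $\alpha=\tilde\alpha-\partial s+\partial_J w$ gives $(\alpha)_H=\tilde\alpha$. This is exactly $\alpha=(\alpha)_H+\partial(-s)+\partial_J w$, the asserted decomposition with $u=-s$ and $v=w$.

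The main obstacle, and the only step that uses the $\mathrm{SL}(n,\H)$-hypothesis beyond the $\partial\partial_J$-lemma, is the validity of the K\"ahler-type identities for the $\star_\Phi$-adjoints on an arbitrary compact HKT $\mathrm{SL}(n,\H)$-manifold --- equivalently $\Delta_{\partial,\Phi}=\Delta_{\partial_J,\Phi}$, or the coincidence of the $(1,0)$ quaternionic Bott--Chern harmonic space with the $\Delta_{\partial,\Phi}$-harmonic one --- which I would quote from \cite{Ver02,LW,GTar}. A secondary point to verify carefully is the type comparison giving $\partial\alpha=\mathcal{L}_Z\Omega$, which rests on $\mathcal{L}_Z$ preserving the $I$-bidegree and on the HKT condition $\partial\Omega=0$.
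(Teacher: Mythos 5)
Your argument is correct, and it reaches the conclusion by a genuinely different mechanism than the paper. The opening is identical: both proofs establish $\partial\alpha=\mathcal{L}_Z\Omega$ by the type comparison and use the $\partial\partial_J$-lemma to write $\partial\alpha=\partial\partial_J v$. From there the paper takes the $\Delta_{\partial,\Phi}$-Hodge decomposition $\alpha=(\alpha)_H+\partial u+\partial^{\star_\Phi}\phi$ at once and identifies the coexact piece with $\partial_J v$ via the pointwise identity $\partial_J v=-\partial^{\star_\Phi}(v\Omega)$ together with an integration by parts (from $\partial\partial^{\star_\Phi}(\phi+v\Omega)=0$ one gets $\partial^{\star_\Phi}(\phi+v\Omega)=0$ on a compact manifold); you instead invoke the $\partial\partial_J$-lemma a second time, on $\partial_J\alpha$, so as to correct $\alpha$ to a form $\tilde\alpha$ that is simultaneously $\partial$- and $\partial_J$-closed, and then show such $(1,0)$-forms are $\Delta_{\partial,\Phi}$-harmonic. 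Your route has the merit of making explicit that the harmonic part is realized by a doubly closed representative, at the cost of a second use of the lemma and of the orthogonality check for $\partial_J w$. One comment on the step you flag as the ``main obstacle'': you do not need the full K\"ahler-type identities for the $\star_\Phi$-adjoints, only their restriction to $\Lambda^{1,0}_I(M)$, namely that a $\partial_J$-closed $(1,0)$-form is $\partial^{\star_\Phi}$-closed and a $\partial$-closed one is $\partial_J^{\star_\Phi}$-closed. These follow from the same short direct computation the paper itself performs (compare Proposition \ref{parallel-harmonic} and the identity $\partial^{\star_\Phi}(v\Omega)=-\partial_J v$ in its proof), using only $\partial\Omega=\partial_J\Omega=0$ and $\star_\Phi\eta=J\bar\eta\wedge\frac{\Omega^{n-1}}{(n-1)!}$; no balanced hypothesis enters. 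I would advise carrying out this two-line computation rather than citing the general commutator identities, since the version \eqref{HKTidentities} with Riemannian adjoints genuinely fails off the balanced locus (test it on $\Omega$ itself against \eqref{eq:del*Omega}), so the precise form of the adjoint being used matters.
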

\begin{proof}
First, we compute
$$\mathcal{L}_{Z}\Omega=\left(\bar{\partial}\Omega\right)(Z)+\partial\left(\Omega(Z)\right)+\bar{\partial}\left(\Omega(Z)\right).
$$
Since $Z$ is $I$-holomorphic then $\mathcal{L}_{Z}\Omega\in \Lambda^{2,0}_I(M)$. Hence, we have that
$$
\mathcal{L}_{Z}\Omega=\partial\left(\Omega(Z)\right)=\partial\alpha.
$$
So, $\mathcal{L}_{Z}\Omega$ is $\partial$-exact. It is also $\partial_J$-closed because $\mathcal{L}_{Z}J=0$ and thus $\partial_J \mathcal{L}_Z=\mathcal{L}_Z \partial_J$. Since the manifold is $\mathrm{SL}(n,\H)$ the $\partial\partial_J$-Lemma holds and so $\mathcal{L}_{Z}\Omega=\partial\partial_Jv,$
for some function $v.$ 
We consider the Hodge decomposition of $\alpha$ with respect to $\Delta_{\partial,\Phi}$:
$$\alpha=\left(\alpha\right)_H+\partial u+\partial^{\star_\Phi}\phi,
$$
where $\phi\in \Lambda^{2,0}_I(M)$. We obtain that
$$
\partial\alpha=\partial\partial^{\star_\Phi}\phi= \partial\partial_Jv.
$$
On the other hand, we claim that $\partial_Jv=-\partial^{\star_\Phi}(v\Omega)$. Indeed
\begin{eqnarray*}
\partial^{\star_\Phi}(v\Omega)&=&-\star_\Phi\partial\star_\Phi\left(\bar{v}\Omega\right),\\
&=&-\star_\Phi\partial\left(v\frac{\Omega^{n-1}}{(n-1)!}\right),\\
&=&-\star_\Phi\left(\partial v\wedge\frac{\Omega^{n-1}}{(n-1)!}\right),\\
&=& -\partial_Jv.
\end{eqnarray*}
We deduce that
$$\partial\partial^{\star_\Phi}\phi=-\partial\partial^{\star_\Phi}(v\Omega).
$$
Since $M$ is compact, we get
$$\partial^{\star_\Phi}\phi=-\partial^{\star_\Phi}(v\Omega)=\partial_Jv.
$$ The Theorem follows.
\end{proof}

Using the decomposition of Theorem \ref{SL(n,H)-hyperholomorphic} we can show that on balanced HKT manifolds the dual of the hyperholomorphic vector field is $\Delta_\partial$-harmonic. To prove this fact, we begin with the following observation:

\begin{lem}\label{coclosed}
Let $(M,I,J,K,\Phi)$ be a compact $\mathrm{SL}(n,\H)$-manifold. Then there exists a compatible hyperhermitian structure $(g,\Omega)$ such that $\Omega^n=\Phi$. Furthermore, if $X\in TM$ is a real vector field which is holomorphic (with respect to either $I$, $J$ or $K$), then $\Lambda(\mathcal{L}_X\Omega)=0$ and $\delta X^{\flat}=0$.
\end{lem}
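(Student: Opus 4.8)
The strategy is to extract everything from the single observation that $\Omega^{n}=\Phi$ is an Obata-parallel — hence closed, and hence holomorphic, since $\partial\Phi\in\Lambda^{2n+1,0}_I(M)=0$ forces $\bar\partial\Phi=0$ — nowhere-vanishing section of the canonical bundle. The existence statement needs little: a hyperhermitian metric always has q-positive top power, so $\Phi$ itself must be q-positive, and the pointwise algebraic fact recalled in Section~\ref{Sec:pre} then produces a unique q-positive $(2,0)$-form $\Omega$ with $\Omega^{n}=\Phi$; take $g$ to be the metric associated to $\Omega$ via \eqref{Omega}.

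For the remaining assertions I would first treat the case $\mathcal{L}_XI=0$; the cases $\mathcal{L}_XJ=0$ and $\mathcal{L}_XK=0$ are handled by the identical argument after cyclically relabelling $I,J,K$ (and replacing $\Phi,\Omega$ by the Obata-parallel $(2n,0)$-form and the hyperhermitian metric adapted to the new distinguished complex structure). Set $Z=X^{1,0}$. Since $X$ is $I$-holomorphic, $\mathcal{L}_X$ preserves the $I$-bidegree, so $\mathcal{L}_X\Omega\in\Lambda^{2,0}_I(M)$, and the pointwise Lefschetz relation $\beta\wedge\Omega^{n-1}=\frac1n(\Lambda\beta)\,\Omega^{n}$ on $\Lambda^{2,0}_I(M)$ (cf. \eqref{eq:sCh}) gives
\[
\mathcal{L}_X\Phi=\mathcal{L}_X(\Omega^{n})=n\,\Omega^{n-1}\wedge\mathcal{L}_X\Omega=\bigl(\Lambda(\mathcal{L}_X\Omega)\bigr)\Phi.
\]
On the other hand $d\Phi=0$, so Cartan's formula yields $\mathcal{L}_X\Phi=d\iota_X\Phi=d\iota_Z\Phi$ (using $\iota_{\bar Z}\Phi=0$), with $\iota_Z\Phi\in\Lambda^{2n-1,0}_I(M)$. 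Writing $f:=\Lambda(\mathcal{L}_X\Omega)\in C^{\infty}(M,\C)$ and comparing bidegrees in $f\Phi=\partial\iota_Z\Phi+\bar\partial\iota_Z\Phi$ then forces $\bar\partial\iota_Z\Phi=0$ and $f\Phi=\partial\iota_Z\Phi$.

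The heart of the matter is to show $f\equiv 0$. From $f\Phi=\partial\iota_Z\Phi$ one gets $\bar\partial(f\Phi)=-\partial\bar\partial\iota_Z\Phi=0$, so $f\Phi$ is a $\bar\partial$-closed $(2n,0)$-form, i.e. a holomorphic section of the canonical bundle; as $\Phi$ is a nowhere-vanishing such section and $M$ is compact, $f=(f\Phi)/\Phi$ is a global holomorphic function, hence a constant $c$. To kill $c$, use $d\bar\Phi=0$ and $\bar\partial\iota_Z\Phi=0$ to write $d(\iota_Z\Phi\wedge\bar\Phi)=\partial\iota_Z\Phi\wedge\bar\Phi=c\,\Phi\wedge\bar\Phi$, so Stokes gives $c\int_M\Phi\wedge\bar\Phi=0$; since $\Phi\wedge\bar\Phi=\Omega^{n}\wedge\bar\Omega^{n}$ is $(n!)^{2}$ times the Riemannian volume form, $c=0$, that is $\Lambda(\mathcal{L}_X\Omega)=0$. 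Consequently $\mathcal{L}_X\Phi=\bigl(\Lambda(\mathcal{L}_X\Omega)\bigr)\Phi=0$ and, $X$ being real, also $\mathcal{L}_X\bar\Phi=\overline{\mathcal{L}_X\Phi}=0$, so $\mathcal{L}_X(\Phi\wedge\bar\Phi)=0$; since $\Phi\wedge\bar\Phi$ is a constant multiple of the volume form this says $X$ is divergence-free, i.e. $\delta X^{\flat}=0$. The only genuinely delicate point is this vanishing of $\Lambda(\mathcal{L}_X\Omega)$: one must first upgrade the a priori merely smooth complex function $\Lambda(\mathcal{L}_X\Omega)$ to a constant, via holomorphicity of the induced section of the canonical bundle, and then remove that constant through the integration-by-parts argument above, which exploits that $c\Phi$ is $\partial$-exact.
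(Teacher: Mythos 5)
Your proof is correct, and its skeleton is the same as the paper's: both arguments boil down to showing that the function $f=\Lambda(\mathcal{L}_X\Omega)$, which satisfies $\mathcal{L}_X\Phi=f\,\Phi$, is constant by a holomorphicity--compactness argument, and then killing the constant by Stokes' theorem. The execution differs in one place, and your version is actually the sharper one. The paper derives constancy from $0=\mathcal{L}_X\bar\partial\Phi=\bar\partial f\wedge\Omega^n$ and then disposes of the constant $c$ by integrating $\mathcal{L}_X(\Omega^n\wedge\bar\Omega^n)=d\,\iota_X(\Omega^n\wedge\bar\Omega^n)$; since $\mathcal{L}_X(\Omega^n\wedge\bar\Omega^n)=(c+\bar c)\,\Omega^n\wedge\bar\Omega^n$, that step a priori only yields $\Re c=0$. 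Your route through Cartan's formula, which exhibits $f\Phi=\partial\iota_Z\Phi$ with $\bar\partial\iota_Z\Phi=0$, lets you apply Stokes to $\iota_Z\Phi\wedge\bar\Phi$ and obtain $c\int_M\Phi\wedge\bar\Phi=0$ directly, killing the full complex constant in one stroke; this cleanly repairs the small imprecision in the published computation. Two minor points: your phrase ``$\Phi$ itself must be q-positive'' is really an appeal to the convention (recalled in Section~\ref{Sec:pre}) that the parallel $(2n,0)$-form on a $\mathrm{SL}(n,\H)$-manifold is chosen q-positive, not a deduction; and your treatment of the $J$- and $K$-holomorphic cases by relabelling is at exactly the same level of brevity as the paper's ``the argument is similar'' --- strictly speaking the relabelled argument proves the vanishing of the $J$- (resp.\ $K$-) analogue of $\Lambda(\mathcal{L}_X\Omega)$, which still gives $\delta X^{\flat}=0$ because the parallel volume forms $\Phi_L\wedge\bar\Phi_L$ for $L\in\{I,J,K\}$ agree up to a constant, and in all later applications $X$ is hyperholomorphic so the $I$-case suffices anyway.
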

\begin{proof}
The first assertion is straightforward as $\Phi$ is a q-positive $(2n,0)$-form it must be the $n$\textsuperscript{th} wedge power of a hyperhermitian metric.
We assume that $X$ is holomorphic with respect to $I$, the argument is similar for $J$ and $K$. By holomorphicity $\mathcal{L}_X$ commutes with $\bar \partial$, thus, we have
\begin{eqnarray*}
    0&=&\mathcal{L}_X\bar{\partial}\Phi,\\
    &=&\bar{\partial}\mathcal{L}_{X}\Omega^n,\\
    &=&n\bar{\partial}\left(\mathcal{L}_X\Omega\wedge\Omega^{n-1}\right),\\
    &=&\bar{\partial}\left(\Lambda\left(\mathcal{L}_X\Omega \right)\Omega^{n}\right),\\
    &=&\bar{\partial}\Lambda\left(\mathcal{L}_X \Omega \right)\wedge\Omega^{n}.
\end{eqnarray*}
It follows by compactness that $\Lambda(\mathcal{L}_X \Omega)$ is constant, namely $\mathcal{L}_X\Omega^n=c\,\Omega^n$ for some constant $c$. Therefore, since $\Omega^n \wedge \bar \Omega^n$ is, up to a constant, the Riemannian volume form, we have
\[
c^2 \Omega^n \wedge \bar \Omega^n=\mathcal{L}_X(\Omega^n \wedge \bar \Omega^n)=d\left( \iota_X(\Omega^n \wedge \bar \Omega^n) \right).
\]
Integrating and using Stokes' Theorem we deduce that $c=0$, i.e. $\mathcal{L}_X\Omega^n=0$. The conclusion follows as $\mathcal{L}_X(\Omega^n\wedge \bar \Omega^n)=\delta \left(X\right)^{\flat}\Omega^n\wedge \bar \Omega^n $.
\end{proof}

\begin{thm}\label{dualhol}
Let $(M,I,J,K,g,\Omega)$ be a compact balanced HKT manifold. Suppose that $Z\in T^{1,0}_I(M)$ is a hyperholomorphic vector field and denote by $\alpha\in \Lambda^{1,0}_I(M)$ the form given by $\alpha=J\left(Z\right)^{\flat}$. Then $\alpha$ and $J\bar \alpha$ are $\Delta_\partial$-harmonic. In particular, the Lie algebra of hyperholomorphic $(1,0)$-vector fields is given by Killing vector fields of constant length.
\end{thm}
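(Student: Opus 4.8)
Start from Theorem~\ref{SL(n,H)-hyperholomorphic}. A compact balanced HKT manifold is in particular a compact HKT $\mathrm{SL}(n,\H)$-manifold, so that theorem gives the decomposition $\alpha=(\alpha)_H+\partial u+\partial_J v$ with $(\alpha)_H$ harmonic for $\Delta_{\partial,\Phi}$. Since on a balanced HKT manifold the adjoints with respect to $*$ and $\star_\Phi$ coincide and $\Delta_\partial=\Delta_{\partial_J}$, one has $\Delta_{\partial,\Phi}=\Delta_\partial$, so $(\alpha)_H$ is $\Delta_\partial$-harmonic. Thus the claim for $\alpha$ reduces to showing $\partial_J v=0$ and $\partial u=0$, and then the claim for $J\bar\alpha$ follows from Proposition~\ref{p0even}.

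First I would kill the $\partial_J v$-term by showing $\mathcal{L}_Z\Omega=0$. By Proposition~\ref{parallel-obata} the real fields $X:=Z+\bar Z$ and $IX$ are holomorphic with respect to $I$, and $Z=\frac12(X-iIX)$; hence Lemma~\ref{coclosed} gives $\Lambda(\mathcal{L}_X\Omega)=\Lambda(\mathcal{L}_{IX}\Omega)=0$, and therefore $\Lambda(\mathcal{L}_Z\Omega)=0$. On the other hand $\mathcal{L}_Z\Omega=\partial\alpha$ as in the proof of Theorem~\ref{SL(n,H)-hyperholomorphic}, and $\partial\alpha=\partial\partial_J v$ because $\partial(\alpha)_H=\partial\partial u=0$. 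Applying $\Lambda$ and recalling $\Lambda(\partial\partial_J v)=\Delta^{\mathrm{Ch}}v$ together with $\Delta^{\mathrm{Ch}}=-\Delta_g$ on functions in the balanced case, we get $\Delta_g v=0$; compactness forces $v$ to be constant, so $\partial_J v=0$ and $\mathcal{L}_Z\Omega=\partial\alpha=0$.

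To kill the $\partial u$-term I would use the companion field $J\bar Z$, which is again hyperholomorphic by Proposition~\ref{parallel-obata}. A direct computation with the sign convention for the action of $J$ on forms (and $J\bar\Omega=\Omega$) gives $\Omega(J\bar Z)=\iota_{J\bar Z}\Omega=J\bar\alpha$; running the previous paragraph verbatim for $J\bar Z$ in place of $Z$ yields $\mathcal{L}_{J\bar Z}\Omega=\partial(J\bar\alpha)=0$. Since $\partial(J\bar\alpha)=-J\,\overline{\partial_J\alpha}$ (cf.\ the computation in the proof of Proposition~\ref{p0even}), this forces $\partial_J\alpha=0$. Now $\alpha\in\Lambda^{1,0}_I(M)$ satisfies $\Lambda\alpha=0$, $\partial\alpha=0$ and $\partial_J\alpha=0$, so by the identity $\partial^*=\Lambda\partial_J-\partial_J\Lambda$ from \eqref{HKTidentities} we get $\partial^*\alpha=0$; hence $\alpha$ is $\Delta_\partial$-harmonic, and Proposition~\ref{p0even} gives the same for $J\bar\alpha$.

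It remains to deduce the last sentence. From $\mathcal{L}_Z\Omega=0$ and $\mathcal{L}_ZJ=0$ one gets $\mathcal{L}_Z\bar\Omega=J(\mathcal{L}_Z\Omega)=0$, hence $\mathcal{L}_{\bar Z}\Omega=\overline{\mathcal{L}_Z\bar\Omega}=0$, so $\mathcal{L}_X\Omega=\mathcal{L}_X\bar\Omega=0$; since $\Omega+\bar\Omega=g(J\cdot,\cdot)$ by \eqref{Omega} and $\mathcal{L}_XJ=0$, this reads $(\mathcal{L}_Xg)(J\cdot,\cdot)=0$, i.e.\ $\mathcal{L}_Xg=0$, so $X$ is Killing. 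For constant length, observe that the real $1$-form $\alpha+\bar\alpha=\iota_X(\Omega+\bar\Omega)=(JX)^{\flat}$ satisfies $|\alpha+\bar\alpha|^2=2|\alpha|^2$ (the $(1,0)$- and $(0,1)$-parts being orthogonal), so $|X|^2=|JX|^2=2|\alpha|^2$; thus it suffices to prove that a $\Delta_\partial$-harmonic $(1,0)$-form on a compact balanced HKT manifold has constant norm. I expect this final point to be the main obstacle: the natural route is a Bochner--Kodaira identity on $(1,0)$-forms whose curvature term is the Chern--Ricci form of $(g,I)$, which vanishes precisely because the metric is balanced HKT; this forces $\alpha$ to be parallel for the Bismut (HKT) connection, and since that connection preserves $g$, the norm $|\alpha|^2=\frac12|X|^2$ is constant. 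The harmonicity statement itself is, by contrast, essentially bookkeeping once Theorem~\ref{SL(n,H)-hyperholomorphic}, Lemma~\ref{coclosed} and the companion field $J\bar Z$ are in hand.
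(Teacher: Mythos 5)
Your proof of the harmonicity statement is correct and close in spirit to the paper's, with one genuinely different ingredient. The paper obtains $\partial^*\alpha=0$ directly from the second half of Lemma~\ref{coclosed} (the identity $\delta X^\flat=0$ applied to the real hyperholomorphic fields underlying $Z$), which removes the $\partial u$-term from the decomposition of Theorem~\ref{SL(n,H)-hyperholomorphic} at the outset; the $\partial_J v$-term is then killed exactly as you do, via $\Lambda(\mathcal{L}_Z\Omega)=0$ and the maximum principle. You instead run the $\Lambda(\mathcal{L}\,\cdot\,\Omega)=0$ argument a second time for the companion field $J\bar Z$ (legitimate, since $J\bar Z=(JX)^{1,0}$ is hyperholomorphic by Proposition~\ref{parallel-obata} and Lemma~\ref{coclosed} applies to its real and imaginary parts), obtaining $\partial(J\bar\alpha)=0$, hence $\partial_J\alpha=0$, and then conclude $\partial^*\alpha=\Lambda\partial_J\alpha-\partial_J\Lambda\alpha=0$ from \eqref{HKTidentities}. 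This is a clean alternative that only uses the first assertion of Lemma~\ref{coclosed}; your identification $\Omega(J\bar Z)=J\bar\alpha$ and the reduction $\Delta_{\partial,\Phi}=\Delta_\partial$ in the balanced case both check out. The Killing property is derived as in the paper.

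The constant-length claim is where your argument has a genuine gap, and the route you sketch cannot be repaired as stated. You reduce to the assertion that every $\Delta_\partial$-harmonic $(1,0)$-form on a compact balanced HKT manifold has constant norm, to be proved by a Bochner--Kodaira identity forcing such a form to be Bismut-parallel. But a Bismut-parallel $(1,0)$-form is dual (the Bismut connection being metric with totally skew torsion) to a Killing vector field, hence by Corollary~\ref{equivalences} to a hyperholomorphic one; Example~\ref{ex:balancedHKT} together with Example~\ref{rmk} exhibits a compact balanced HKT nilmanifold on which \emph{all} $(1,0)$-forms are $\Delta_\partial$-harmonic while the fields dual to $\phi^1,\phi^2$ are not hyperholomorphic (and not Killing). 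So the Bochner argument you hope for would prove a false statement: the curvature term does not vanish in the way you expect, because the relevant Weitzenb\"ock formula on a balanced non-K\"ahler manifold carries torsion terms beyond the Chern--Ricci curvature. The statement you actually need is weaker and uses the hyperholomorphicity of $Z$, and the paper's argument is short: for a hyperholomorphic (hence Killing and $I$-holomorphic) field $Y$, the Lie derivative $\mathcal{L}_Y$ commutes with $\partial$ and $\partial^*$, and since $\alpha$ is $\partial$-closed, $\mathcal{L}_Y\alpha=d(\alpha(Y))^{1,0}$ up to type considerations, whence $\partial^*\partial(\alpha(Y))=\mathcal{L}_Y(\partial^*\alpha)=0$; thus $\alpha(Y)$ is constant, and the choice $Y=J\bar Z$ gives $\alpha(J\bar Z)=g(JZ,J\bar Z)=|Z|_g^2$ constant. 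You should replace your final paragraph by this argument.
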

\begin{proof}
Since $Z$ is hyperholomorphic, we can apply Lemma \ref{coclosed} with $\Omega^n=\Phi$ and deduce $\partial^\ast \alpha=0$. It follows from Theorem~\ref{SL(n,H)-hyperholomorphic} that 
$$\alpha=\left(\alpha\right)_H+\partial_Jv,
$$
where $(\cdot)_H$ is the harmonic part with respect to the Laplacian $\Delta_\partial$ and $u,v$ are some functions. We observe that $\mathcal{L}_{Z}\Omega=\partial\alpha=\partial\partial_Jv$. From Lemma \ref{coclosed} we deduce that $\Lambda(\partial\partial_Jv)=0$ and since $M$ is compact, by the maximum principle we conclude that $v$ is constant and thus $\partial \alpha=0$. The harmonicity of $J\bar \alpha$ follows from Proposition \ref{p0even}. Also, as $\mathcal{L}_Z\Omega=\mathcal{L}_ZJ=0$, we deduce $\mathcal{L}_Zg=0$ and so $Z$ is a Killing vector field i.e. $X,IX$ are real Killing vector fields, where $X^{1,0}=Z$. Finally, let $Y\in T^{1,0}_I(M)$ be another hyperholomorphic vector field, since $ \alpha$ is $\Delta_\partial$-harmonic and $Y$ is Killing, we get $\partial^\ast\partial\left( \alpha(Y)\right)=\mathcal{L}_Y\partial^\ast \alpha=0$, thus $\partial\left(\alpha(Y)\right)=0$. We deduce that $\alpha(Y)$ is constant, so choosing $Y=J\bar{Z}$ we get that $|Z|_g^2$ is constant. 
\end{proof}

\begin{cor}\label{equivalences}
Let $(M,I,J,K,g,\Omega)$ be a compact balanced HKT manifold. Then the following are equivalent for $Z\in T^{1,0}_I(M)$:
\begin{enumerate}
    \item $Z$ is hyperholomorphic;
    \item $Z$ is Killing (i.e. $X,IX$ are real Killing vector fields, where $X^{1,0}=Z$);
    \item $Z$ is parallel with respect to the Bismut connection.
\end{enumerate}
\end{cor}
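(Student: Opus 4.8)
The plan is to prove the three equivalences by a cycle of implications, using the results already established. The implication $(1)\Rightarrow(2)$ has already been proven inside Theorem \ref{dualhol}: a hyperholomorphic $Z\in T^{1,0}_I(M)$ satisfies $\mathcal L_Z\Omega=\mathcal L_ZJ=0$, hence $\mathcal L_Zg=0$, and this is exactly the statement that $X$ and $IX$ are real Killing (where $X^{1,0}=Z$). So the content is in $(2)\Rightarrow(3)$ and $(3)\Rightarrow(1)$.

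For $(2)\Rightarrow(3)$, I would recall that on an HKT manifold the Bismut connection $\nabla^{\mathrm B}$ of $(g,I)$ coincides with that of $(g,J)$ and of $(g,K)$ (this is part of the HKT package—the three Bismut connections agree), and in fact on a hyperhermitian manifold the common Bismut connection preserves $I$, $J$ and $K$ simultaneously. Its torsion is the totally skew-symmetric form $c=-d^c F$ (equivalently $Jd\Omega$-type data). If $X$ and $IX$ are Killing and $Z=X^{1,0}$ is $I$-holomorphic (which is part of hyperholomorphicity—but at this stage in the cycle we do not yet know $Z$ is hyperholomorphic, only Killing), the cleanest route is: from the Bismut connection formula $g(\nabla^{\mathrm B}_YX,W)=g(\nabla^{LC}_YX,W)+\tfrac12 c(Y,X,W)$, the Killing condition says $g(\nabla^{LC}_YX,W)$ is skew in $(Y,W)$, so $\nabla^{\mathrm B}X^\flat$ is a sum of two $2$-forms, and one shows the torsion term cancels the non-closed part using that $X$ (and $IX$) preserve the torsion $3$-form because they preserve $g$, $I$, $J$, $K$. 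Alternatively—and this is probably the smoother approach—I would use the already-proven harmonicity: a Killing field on a compact manifold whose dual is coclosed (automatic for Killing fields) combined with the balanced HKT structure forces, via Theorem \ref{dualhol}'s machinery, that $\alpha=J Z^\flat$ is $\Delta_\partial$-harmonic, hence $\nabla^{\mathrm B}$-parallel by the identification of Bismut-parallel $(1,0)$-forms with $\Delta_\partial$-harmonic ones on balanced HKT $\mathrm{SL}(n,\mathbb H)$-manifolds. I expect the honest version to require checking that a real Killing vector field that also has $\mathcal L_XI$ controlled is genuinely Bismut-parallel, so the subtle point is whether Killing alone suffices or whether one must first upgrade $(2)$ to hyperholomorphicity; I would resolve this by noting that $\mathcal L_ZJ$ for $Z$ Killing is constrained because $\Omega=\tfrac12(g(J\cdot,\cdot)+\sqrt{-1}g(K\cdot,\cdot))$ and $\mathcal L_Zg=0$ forces $\mathcal L_Z\Omega$ to be determined by $\mathcal L_ZJ$, $\mathcal L_ZK$ alone, then use the $\mathrm{SL}(n,\mathbb H)$ / balanced structure to kill it.

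For $(3)\Rightarrow(1)$, this is immediate and is essentially contained in Proposition \ref{parallel-obata}: if $Z$ is parallel with respect to the Bismut connection, then since the Bismut connection preserves $I$, $J$, $K$, the computation $(\mathcal L_XL)(Y)=-\nabla^{\mathrm B}_{LY}X+L\nabla^{\mathrm B}_YX+(\text{torsion terms})$ shows $\mathcal L_XL=0$ once $\nabla^{\mathrm B}X=0$, provided the torsion contributions vanish—and they do, because the torsion of the Bismut connection is a $3$-form and contracting a parallel vector into it in the relevant combination gives zero (the torsion terms appear antisymmetrically and cancel against their $L$-images using the quaternionic relations). More directly: a $\nabla^{\mathrm B}$-parallel vector field preserves every $\nabla^{\mathrm B}$-parallel tensor, and $I,J,K$ are $\nabla^{\mathrm B}$-parallel, so $X$ is hyperholomorphic; hence so is $Z=X^{1,0}$ by Proposition \ref{parallel-obata}.

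The main obstacle is $(2)\Rightarrow(3)$, specifically making precise that the Killing condition together with the balanced HKT geometry forces Bismut-parallelism and not merely something weaker. I would handle it by combining two facts: first, that on a compact balanced HKT $\mathrm{SL}(n,\mathbb H)$-manifold the space of $\Delta_\partial$-harmonic $(1,0)$-forms is exactly the space of Bismut-parallel $(1,0)$-forms (Proposition \ref{parallel-harmonic} gives one inclusion; the balanced condition and a Bochner-type argument on the Bismut connection—whose curvature and torsion are controlled by the $\mathrm{SL}(n,\mathbb H)$ condition—give the reverse); and second, that the dual of a Killing field which additionally has constant length (which we get from Theorem \ref{dualhol}) and is $\Delta_\partial$-harmonic must be parallel, because its covariant derivative is both skew (Killing) and has zero divergence and zero norm-variation, forcing $\nabla^{\mathrm B}\alpha=0$. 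I would write this up by first showing $(2)\Rightarrow$ ``$Z$ is hyperholomorphic'' (closing the cycle back to $(1)$ directly, which is the economical thing to do), and only then deduce parallelism from Proposition \ref{parallel-obata}; that reorganization sidesteps the delicate Bochner argument entirely, so in the final write-up I would prove $(1)\Leftrightarrow(2)$ first and then $(2)\Rightarrow(3)\Rightarrow(1)$ with $(3)\Rightarrow(1)$ trivial, leaving $(1)\Rightarrow(3)$—equivalently, hyperholomorphic implies Bismut-parallel on balanced HKT—as the single real computation, which follows because hyperholomorphic means Obata-parallel (Proposition \ref{parallel-obata}) and on a balanced HKT $\mathrm{SL}(n,\mathbb H)$-manifold a short torsion computation shows the Obata and Bismut connections agree on the dual one-form, the Lee form being zero.
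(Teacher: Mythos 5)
Your implication $(1)\Rightarrow(2)$ agrees with the paper: it is exactly the Killing statement already extracted in Theorem~\ref{dualhol}. But the two remaining implications are where the content of the corollary lies, and the paper disposes of them by citation --- $(2)\Rightarrow(1)$ is \cite[Theorem 1.2 (i) (a)]{GI}, a Bochner-type theorem asserting that on a compact balanced Hermitian manifold with vanishing Chern--Ricci form every Killing field is holomorphic (applied to each of $I,J,K$), and the equivalence with $(3)$ is quoted from \cite{OPS}. Your proposal does not actually supply substitutes for either input. For $(2)\Rightarrow(1)$, the observation that $\mathcal{L}_Zg=0$ forces $\mathcal{L}_Z\Omega$ to be ``determined by $\mathcal{L}_ZJ$ and $\mathcal{L}_ZK$'' is a tautology from the definition $\Omega=\tfrac12\left(g(J\cdot,\cdot)+\sqrt{-1}\,g(K\cdot,\cdot)\right)$; it gives no mechanism for why $\mathcal{L}_ZJ$ and $\mathcal{L}_ZK$ vanish. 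The genuine mechanism is an integrated Bochner identity on the compact Chern--Ricci-flat balanced manifold, and without it (or the citation) your cycle of implications does not close.

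The Bismut-connection steps also contain unjustified claims. The assertion that a $\nabla^{\mathrm B}$-parallel vector field preserves every $\nabla^{\mathrm B}$-parallel tensor is false for connections with torsion: with $\nabla^{\mathrm B}X=0$ and $\nabla^{\mathrm B}L=0$ one gets $(\mathcal{L}_XL)(Y)=-T(X,LY)+LT(X,Y)$, and the vanishing of this expression requires $\iota_Xc$ to be of type $(2,0)+(0,2)$ with respect to $L$, which is not what the Bismut torsion (of type $(2,1)+(1,2)$) provides; this needs a real argument, not the remark that $c$ is a $3$-form. (The easy direction here is $(3)\Rightarrow(2)$: $g(\nabla^{\mathrm{LC}}_YX,W)=-\tfrac12c(Y,X,W)$ is skew, so Bismut-parallel fields are Killing; but then you are back to needing $(2)\Rightarrow(1)$.) Likewise, ``Killing of constant length with harmonic dual forces $\nabla^{\mathrm B}\alpha=0$'' is not a valid inference --- constant-length Killing fields need not be parallel for any natural connection without further input --- and the claim that the Obata and Bismut connections agree on $X^\flat$ because the Lee form vanishes is unsupported: by the formula recalled in Lemma~\ref{delta}, their difference is the full torsion tensor $A$, of which the Lee form is only a trace. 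In short, $(1)\Rightarrow(2)$ is fine, but $(2)\Rightarrow(1)$ and $(1)\Leftrightarrow(3)$ each require an external theorem or a computation that the proposal gestures at but does not perform.
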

\begin{proof}
If $Z$ is hyperholomorphic then we have shown in Theorem~\ref{dualhol} that it is Killing. The converse is a consequence of \cite[Theorem 1.2 (i) (a)]{GI} because balanced HKT structures are Chern--Ricci flat. The equivalence with the fact that $Z$ is Bismut-parallel follows from~\cite{OPS}.
\end{proof}

In the next example we show that if we drop the HKT assumption Theorem~\ref{dualhol} does not hold anymore.

\begin{ex}\label{balanced-nonHKT}
Consider the nilpotent Lie algebra in \cite[Example 3]{LW} with structure equations
$$\left\{
    \begin{array}{llll}
        de^1=de^2=de^3=de^4=de^5=de^6=de^7=de^8=0,  \\
        de^9=e^1\wedge e^5,\\
        de^{10}=e^1\wedge e^6,\\
        de^{11}=e^1\wedge e^7,\\
        de^{12}=e^1\wedge e^8,
    \end{array}
\right.
$$
and hypercomplex structure
$$
Ie_1=e_2,\quad Ie_3=e_4,\quad Ie_5=e_6,\quad Ie_7=e_8, \quad Ie_9=e_{10}, \quad Ie_{11}=e_{12}, 
$$
$$
Je_1=e_3,\quad J e_2=-e_4,\quad Je_5=e_7,\quad Je_6=-e_8, \quad Je_9=e_{11},\quad Je_{10}=-e_{12}.
$$
Since the structure constants are rational the corresponding simply connected nilpotent Lie group admits lattices and so the hypercomplex structure descends to the corresponding compact nilmanifold $M$. Note that $M$ carries no HKT metrics because the hypercomplex structure is not abelian (see \cite[Theorem 4.6]{BDV}). On the other hand, $M$ admits a quaternionic balanced metric which is also balanced (cf. \cite[Example 9.1]{FG}).  The Lie algebra of left-invariant hyperholomorphic vector fields is generated by $
e_1, e_2, e_3, e_4, e_9, e_{10}, e_{11}, e_{12}. $
Furthermore one could see that the space of $\Delta_\partial$-harmonic $(1,0)$-forms is generated by $\phi^1,\phi^2,\phi^3,\phi^4$, where $ \phi^i=e^{2i-1}-\sqrt{-1}e^{2i}.$ Therefore $\phi^5=e^{9}-\sqrt{-1}e^{10}$ and $\phi^6=e^{11}-\sqrt{-1}e^{12}$ are not harmonic, even though $e_9,e_{10},e_{11},e_{12}$ are hyperholomorphic, so the HKT assumption in Theorem \ref{dualhol} cannot be relaxed. We also note that even though $e_1$ is hyperholomorphic, $e_1$ is not Killing with respect to any left-invariant metric $g$ because, for instance
\[
(\mathcal{L}_{e_1}g)(e_5,e_9)=-g([e_1,e_5],e_9)-g(e_5,[e_1,e_9])=g(e_9,e_9)\neq 0.
\]
\end{ex}

\begin{ex}\label{rmk}
The converse of Theorem~\ref{dualhol} is false as there can be $\Delta_\partial$-harmonic forms on compact balanced HKT manifolds that do not come from hyperholomorphic vector fields. An example is provided in Example~\ref{ex:balancedHKT}. Indeed, in the notations of Example~\ref{ex:balancedHKT} all $1$-forms are $\Delta_\partial$-harmonic and yet the only linearly independent hyperholomorphic vector fields are 
$e_5,e_6,e_7,e_8$. Thus, $e_1,e_2,e_3,e_4$ are not hyperholomorphic but the corresponding forms $\phi^1,\phi^2$ are $\Delta_\partial$-harmonic.
\end{ex}



\begin{rmk}
As shown in the next examples, we remark that if $X\in TM$ is a real hyperholomorphic vector field on a HKT manifold, then $X^{1,0}$ is not necessarily hyperholomorphic. Also, the balanced condition can not be dropped in Corollary~\ref{equivalences}. Moreover, if $X\in TM$ is a real hyperholomorphic vector field on a balanced HKT manifold, then $X$ is not necessarily a (real) Killing vector field.  
\end{rmk}

\begin{ex}\label{Hopf}
Consider the Hopf surface $M=\mathrm{SU}(2)\times S^1$, which is described by a global frame $\{ e_1,e_2,e_3,e_4\} $ and corresponding coframe $\{ e^1,e^2,e^3,e^4\} $ with structure equations
\[
\begin{cases}
de^1=-2e^2\wedge e^3,\\
de^2=2e^1\wedge e^3,\\
de^3=-2e^1\wedge e^2,\\
de^4=0.
\end{cases}
\]
We endow $M$ with the left-invariant hypercomplex structure $(I,J,K)$ such that
\[
Ie_1=e_2, \quad Ie_3=e_4, \qquad Je_1=e_3, \quad Je_2=-e_4.
\]
Consider the left-invariant hyperhermitian metric
\[
\Omega=\phi^1\wedge \phi^2,
\]
where
\[
\phi^1=e^1-\sqrt{-1}e^2, \quad \phi^2=e^3-\sqrt{-1}e^4.
\]
The metric $\Omega$ is HKT for dimensional reasons. On the other hand, it is well-known that the Hopf surface admits no balanced metric.
We note that $e_4$ is hyperholomorphic, but $e_3$ is not. Moreover, $(e^4)^{1,0}$ is not $\partial$-closed. We also note that $e_i$ is Killing with respect to $g$ for all $i=1,\dots,4$. 
\end{ex}

\begin{ex}
Consider the nilpotent Lie algebra $\mathfrak{g}= \langle e_1,\dots,e_{12} \rangle $ with structure equations
\begin{align*}
[e_1,e_5]=[e_2,e_6]=[e_3,e_7]=[e_4,e_8]&=e_9,\\
[e_1,e_6]=-[e_2,e_5]=-[e_3,e_8]=[e_4,e_7]&=e_{10},\\
[e_1,e_7]=[e_2,e_8]=-[e_3,e_5]=-[e_4,e_6]&=e_{11},\\
[e_1,e_8]=-[e_2,e_7]=[e_3,e_6]=-[e_4,e_5]&=e_{12},
\end{align*}
and abelian hypercomplex structure
\[
Ie_1=e_2, \quad Ie_3=e_4,\quad Ie_5=e_6, \quad Ie_7=e_8, \quad Ie_9=e_{10}, \quad Ie_{11}=e_{12},
\]
\[
Je_1=e_3, \quad Je_2=-e_4,\quad Je_5=e_7, \quad Je_6=-e_8, \quad Je_9=e_{11}, \quad Je_{10}=-e_{12}.
\]
Note that $e_1$ is hyperholomorphic, however $e_2=Ie_1$ is not $J$-holomorphic as
\[
[e_2,Je_5]-J[e_2,e_5]=[e_2,e_7]+Je_{10}=-e_{12}-e_{12}\neq 0.
\]
It follows that $e_1$ is hyperholomorphic but not Obata parallel, namely $(e_1)^{1,0}$ is not hyperholomorphic. Furthermore, the hyperhermitian metric
\[
g=\sum_{i=1}^{12}e^i\otimes e^i
\]
is balanced HKT but $e_1$ is not Killing because
\[
(\mathcal{L}_{e_1}g)(e_5,e_9)=-g([e_1,e_5],e_9)-g(e_5,[e_1,e_9])=-1\neq 0.
\]
\end{ex}


\section{Left-invariant abelian hypercomplex structures}\label{Sec:abelian}

In this section we focus on left-invariant abelian hypercomplex structures on Lie algebras.

\begin{thm}\label{abeliancenter}
Let $G$ be a Lie group endowed with a left-invariant abelian hypercomplex structure $(I,J,K)$. Then the Lie algebra of hyperholomorphic $(1,0)$-vector fields is given by the center of $TG\otimes\mathbb{C}.$
\end{thm}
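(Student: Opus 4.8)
The plan is to work at the Lie algebra level, identifying a left-invariant vector field with the corresponding element of the Lie algebra $\mathfrak{g}$ of $G$, and a left-invariant $(1,0)$-vector field (with respect to $I$) with an element of $\mathfrak{g}^{1,0}:=\mathfrak{g}^{1,0}_I$. For $X\in\mathfrak{g}$ put $Z=X^{1,0}$. By Proposition~\ref{parallel-obata}, $Z$ is hyperholomorphic if and only if $X$ is parallel for the Obata connection $\nabla$, that is $\nabla_YX=0$ for every $Y\in\mathfrak{g}$; moreover $X$ being (real) hyperholomorphic is equivalent to $\mathcal{L}_XL=0$ for $L\in\{I,J,K\}$, which for left-invariant fields means exactly that $\mathrm{ad}_X$ commutes with $I$, $J$ and $K$ (since $(\mathcal{L}_XL)(Y)=[\mathrm{ad}_X,L](Y)$ on left-invariant $Y$). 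Using the abelian relations $[LX,Y]=-[X,LY]$ (valid because each of $I,J,K$ is an integrable abelian complex structure), one sees that $X\in\mathfrak{z}(\mathfrak{g})$ implies $LX\in\mathfrak{z}(\mathfrak{g})$, so $\mathfrak{z}(\mathfrak{g})$ is $(I,J,K)$-invariant and $\mathfrak{z}(\mathfrak{g})\otimes\C$ splits into its $(1,0)$- and $(0,1)$-parts. Hence it suffices to show that $X^{1,0}$ is hyperholomorphic precisely when $X\in\mathfrak{z}(\mathfrak{g})$, the claim about the center of $TG\otimes\C$ then following by passing to $(1,0)$-parts.

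The key ingredient is the explicit description of the Obata connection of a left-invariant abelian hypercomplex structure: on left-invariant fields,
\[
\nabla_XY=\tfrac{1}{2}\Bigl([X,Y]+I[IX,Y]+J[JX,Y]+K[KX,Y]\Bigr).
\]
This is proved by verifying that the right-hand side defines a torsion-free connection with $\nabla I=\nabla J=\nabla K=0$, and then invoking the uniqueness of the Obata connection. Torsion-freeness is immediate: the abelian identity yields $[LX,Y]=-[X,LY]=[LY,X]$, so the $I,J,K$-terms are symmetric in $X$ and $Y$ and only the term $\tfrac12([X,Y]-[Y,X])=[X,Y]$ survives in the skew part. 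For $\nabla I=0$ one expands $\nabla_X(IY)-I\nabla_XY$ using $IJ=K$, $IK=-J$, $JK=I$ together with the abelian relations $[X,IY]=-[IX,Y]$, $[IX,IY]=[X,Y]$, $[JX,IY]=-[KX,Y]$, $[KX,IY]=[JX,Y]$, after which all terms cancel; the identities $\nabla J=0$ and $\nabla K=0$ are obtained analogously thanks to the quaternionic symmetry of the formula.

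Given the formula, substitute $[LY,X]=-[X,LY]=-\mathrm{ad}_X(LY)$ to obtain, for all $Y\in\mathfrak{g}$,
\[
\nabla_YX=-\tfrac{1}{2}\bigl(\mathrm{ad}_X+I\,\mathrm{ad}_X\,I+J\,\mathrm{ad}_X\,J+K\,\mathrm{ad}_X\,K\bigr)(Y),
\]
so $X^{1,0}$ is hyperholomorphic exactly when $\mathrm{ad}_X+I\,\mathrm{ad}_X\,I+J\,\mathrm{ad}_X\,J+K\,\mathrm{ad}_X\,K=0$ in $\mathrm{End}(\mathfrak{g})$. Now if $X^{1,0}$ is hyperholomorphic, then by Proposition~\ref{parallel-obata} $X$ is a real hyperholomorphic vector field, hence $\mathrm{ad}_X$ commutes with $I$, $J$ and $K$; therefore $I\,\mathrm{ad}_X\,I=\mathrm{ad}_X\,I^2=-\mathrm{ad}_X$, and likewise for $J$ and $K$, so the above operator equals $-2\,\mathrm{ad}_X$. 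Consequently $\mathrm{ad}_X=0$, i.e.\ $X\in\mathfrak{z}(\mathfrak{g})$. Conversely, if $X\in\mathfrak{z}(\mathfrak{g})$ then $\mathrm{ad}_X=0$ and, by the abelian relations, $\mathrm{ad}_{LX}=0$ for each $L\in\{I,J,K\}$, so every bracket appearing in $\nabla_YX$ vanishes and $X$ is Obata-parallel; by Proposition~\ref{parallel-obata} again, $X^{1,0}$ is hyperholomorphic. Hence the left-invariant hyperholomorphic $(1,0)$-vector fields are exactly $\{X^{1,0}:X\in\mathfrak{z}(\mathfrak{g})\}$, the $(1,0)$-part of the complexified center, which is the center of $TG\otimes\C$.

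The step I expect to be the main obstacle is establishing the Obata connection formula, and within it the verification $\nabla I=\nabla J=\nabla K=0$, which is a somewhat delicate computation with the quaternionic and abelian bracket relations; once this is in place the remainder is elementary linear algebra on $\H$-modules.
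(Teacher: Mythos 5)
Your proposal is correct and follows essentially the same route as the paper: after applying the abelian identities your formula for the Obata connection coincides with the one the paper obtains by specializing Soldatenkov's general expression for left-invariant hypercomplex structures, and the remaining argument (Proposition~\ref{parallel-obata} converting hyperholomorphicity of $X^{1,0}$ into Obata-parallelism, which then forces $\mathrm{ad}_X=0$) is the same computation, merely organized through the operator $\mathrm{ad}_X+I\,\mathrm{ad}_X\,I+J\,\mathrm{ad}_X\,J+K\,\mathrm{ad}_X\,K$ rather than through the comparison of $\nabla_XY=2[X,Y]$ with the torsion-free identity $\nabla_XY=[X,Y]$. The only real divergence is that you derive the connection formula from the uniqueness of the Obata connection instead of citing it, which is a correct, self-contained (if longer) alternative.
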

\begin{proof}
The Obata connection is given by~\cite{Sol}:
$$
\nabla_XY=\frac{1}{2}\left([X,Y]+I[IX,Y]-J[X,JY]+K[IX,JY]    \right),
$$
for any pair of vector fields $X,Y$. Since the hypercomplex structure is abelian we obtain
\begin{eqnarray*}
\nabla_XY=\frac{1}{2}\left([X,Y]-I[X,IY]-J[X,JY]-K[X,KY]\right),
\end{eqnarray*}
so it is clear that if $X\in T^{1,0}_I(G)$ lies in the center it is Obata parallel and thus hyperholomorphic by Proposition~\ref{parallel-obata}. Conversely, if $X\in T^{1,0}_I(G)$ is a hyperholomorphic vector field, 
\begin{eqnarray*}
\nabla_XY&=&\frac{1}{2}\left([X,Y]-I[X,IY]-J[X,JY]-K[X,KY]\right),\\
&=&\frac{1}{2}\left([X,Y]+[X,Y]+[X,Y]+[X,Y]\right),\\
&=&2[X,Y].
\end{eqnarray*}
On the other hand, as $\nabla$ is torsion-free we also have
\begin{equation}
[X,Y]=\nabla_XY.
\end{equation}
We obtain that $[X,Y]=0$, for any vector field $Y.$ The theorem follows.
\end{proof}

\begin{ex}
We exhibit an example of a Lie algebra with non-abelian hypercomplex structure where the Lie algebra of hyperholomorphic vector fields is larger than the center, even though there exists a compatible balanced HKT metric. This shows that the assumption that the hypercomplex structure is abelian in Theorem \ref{abeliancenter} is necessary. The example is due to Barberis and Fino \cite[Section 3.1]{BF}. It is a $12$-dimensional solvable Lie algebra $\mathfrak{g}=\langle e_1,\dots,e_{12}\rangle$ with structure equations
 $$\left\{
    \begin{array}{l}
        de^1=de^5=de^6=de^7=de^8=0,  \\
        de^2=-e^5\wedge e^6+e^7\wedge e^8,\\
        de^3=-e^6\wedge e^8-e^5\wedge e^7,\\
        de^4=e^6\wedge e^7-e^5\wedge e^8,\\
        de^9=e^1 \wedge e^{10},\\
        de^{10}=-e^1\wedge e^9,\\
        de^{11}=e^1\wedge e^{12},\\
        de^{12}=-e^1\wedge e^{11},
    \end{array}
\right.$$
and hypercomplex structure
\[
Ie^1=e^2, \quad Ie^3=e^4,\quad Ie^5=e^6, \quad Ie^7=e^8, \quad Ie^9=e^{10}, \quad Ie^{11}=e^{12},
\]
\[
Je^1=e^3, \quad Je^2=-e^4,\quad Je^5=e^7, \quad Je^6=-e^8, \quad Je^9=e^{12}, \quad Je^{10}=-e^{12}.
\]
The center of left-invariant $ (1,0) $-vector fields is generated by $Z=e_3-\sqrt{-1}e_4$ whereas the space of hyperholomorphic vector fields also contains $J\bar Z $.
\end{ex}

\section{HKT--Einstein metrics and hyperholomorphic vector fields}\label{Sec:Einstein}
In this section we show the non-existence of hyperholomorphic $(1,0)$-vector fields on compact HKT--Einstein manifolds with non-zero Einstein constant. Such a result follows from a sort of Bochner-type formula proved in Proposition \ref{Prop:Bochner}. We start with a preliminary lemma:

\begin{lem}\label{delta}
Let $(M,I,J,K,g,\Omega)$ be a HKT manifold and $\nabla$ the Obata connection on it. Then for every vector field $X\in TM$ we have
\begin{equation}\label{eq:delta_1}
\delta X^\flat=-\mathrm{tr}(\nabla X)+2\theta(X)\,.
\end{equation}
\end{lem}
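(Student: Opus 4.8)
The plan is to compute the divergence $\delta X^\flat$ pointwise in terms of a local frame and then rewrite the resulting expression using the Obata connection and the Lee form. First I would recall that, up to a nonzero constant, the Riemannian volume form of $g$ is $\Omega^n\wedge\bar\Omega^n$, and that for a vector field $X$ one always has $\mathcal{L}_X(\mathrm{vol}_g)=-(\delta X^\flat)\,\mathrm{vol}_g$; this is the identity already used in the proof of Lemma \ref{coclosed}. On the other hand, $\mathcal{L}_X(\mathrm{vol}_g)=d(\iota_X\mathrm{vol}_g)$, so the problem reduces to expressing $d\iota_X(\Omega^n\wedge\bar\Omega^n)$ in a convenient form. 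Using $\iota_X(\Omega^n\wedge\bar\Omega^n)=n\,(\iota_X\Omega)\wedge\Omega^{n-1}\wedge\bar\Omega^n+n\,\Omega^n\wedge(\iota_X\bar\Omega)\wedge\bar\Omega^{n-1}$ and distributing $d=\partial+\bar\partial$, the $\partial\Omega=\bar\partial\bar\Omega=0$ (HKT) conditions kill several terms, while the surviving ones involving $\partial\bar\Omega^n$ and $\bar\partial\Omega^n$ can be handled with \eqref{eq:Lee}, producing the $\theta(X)$ contribution.

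An alternative, and probably cleaner, route is the direct frame computation. Pick a local $g$-orthonormal frame $\{f_a\}$ with dual coframe $\{f^a\}$; then $\delta X^\flat=-\sum_a g(\nabla^{LC}_{f_a}X,f_a)$ where $\nabla^{LC}$ is Levi-Civita. The idea is to substitute $\nabla^{LC}=\nabla+\tfrac12 S$, where $S$ is (twice) the contorsion/difference tensor between the Obata connection and Levi-Civita, so $\delta X^\flat=-\mathrm{tr}(\nabla X)-\tfrac12\sum_a g(S(f_a,X),f_a)$. The remaining task is to identify the trace $\tfrac12\sum_a g(S(f_a,X),f_a)$ with $-2\theta(X)$. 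Here I would invoke the known description of the difference between the Obata connection and the Bismut/Levi-Civita connections on a hyperhermitian manifold, together with the fact (cited in Section \ref{Sec:pre} from \cite{FG}) that the common Lee form $\theta$ of $(g,I),(g,J),(g,K)$ encodes exactly the relevant trace; concretely one expects a relation of the shape $\mathrm{tr}_g S(\,\cdot\,,X)\propto\theta(X)$ with the constant $4$ fixed by testing on the Bismut-flat/balanced model or by a short index chase.

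The main obstacle is bookkeeping: correctly relating the Obata connection to Levi-Civita and pinning down the numerical constant in front of $\theta(X)$, since the Obata connection is only hyperhermitian-compatible in the torsion-free sense, not metric, and its torsion-freeness means all the ``defect'' sits in non-metricity rather than torsion — the opposite of the Bismut situation. I would manage this by first proving the formula in the HKT case where one can compare with the Bismut connection $\nabla^B$ (which is metric, has skew-symmetric torsion, and satisfies $\delta X^\flat=-\mathrm{tr}(\nabla^B X)+\theta^B(X)$-type identities with the Bismut Lee form), and then translating via the explicit Obata$-$Bismut difference tensor available on HKT manifolds. Testing both sides on a parallel vector field (where $\nabla X=0$ and the formula predicts $\delta X^\flat=2\theta(X)$) against Example \ref{ex:balancedHKT}, where $\theta=0$ and parallel fields are divergence-free, gives a consistency check that fixes signs and constants.

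Once \eqref{eq:delta_1} is established, I would note the immediate consequences needed later: for a balanced HKT metric $\theta=0$, so $\delta X^\flat=-\mathrm{tr}(\nabla X)$, and for an Obata-parallel $X$ the trace vanishes, recovering $\delta X^\flat=0$ already observed; in the general HKT case \eqref{eq:delta_1} is exactly the input for the Bochner-type identity of Proposition \ref{Prop:Bochner}.
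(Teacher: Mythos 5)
Your second route is the same skeleton as the paper's proof: trace the identity $\nabla=\nabla^{\mathrm{LC}}+(\text{difference tensor})$ over an orthonormal frame, use $\mathrm{tr}(\nabla^{\mathrm{LC}}X)=-\delta X^\flat$, and identify the remaining trace with $2\theta(X)$. But that identification is the entire content of the lemma, and you leave it to an appeal to a ``known description'' plus a normalization argument that cannot work: you propose to pin the constant in $\mathrm{tr}_g S(\cdot,X)\propto\theta(X)$ by testing on a balanced model (Example \ref{ex:balancedHKT}), where $\theta\equiv 0$, so \emph{every} choice of constant is consistent there and nothing is fixed. What is actually needed is the explicit formula of Ivanov--Petkov, $g(\nabla_XY,Z)=g(D^b_XY,Z)+A(X,Y,Z)$ with $D^b$ the Bismut connection and
\[
A(X,Y,Z)=-\tfrac{1}{2}\bigl(T(X,IY,IZ)+T(IX,IY,Z)+T(X,KY,KZ)+T(IX,KY,JZ)\bigr),
\]
where $T$ is the Bismut torsion $3$-form. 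The Bismut torsion itself contributes nothing to the trace (being skew, $T(e_i,X,e_i)=0$), the first two terms of $A$ cancel pairwise in the trace by skew-symmetry of $T$, and the last two give $-2\sum_i T(KX,e_i,Ke_i)=-4\theta(X)$ by the trace identity of Ivanov--Papadopoulos; multiplying by $-\tfrac12$ yields $2\theta(X)$. Without writing out $A$ and doing this index chase (or testing on a genuinely non-balanced example such as the Hopf surface), the coefficient $2$ --- which is exactly what Proposition \ref{Prop:Bochner} and Theorem \ref{Thm:Einst} depend on --- is not established.

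Your first route is also not viable as stated: $\mathcal{L}_X\mathrm{vol}_g$ computes the Riemannian divergence $-\delta X^\flat$, but nothing in that calculation produces the Obata trace $\mathrm{tr}(\nabla X)$, since $\Omega^n$ is not Obata-parallel on a general HKT manifold (only on $\mathrm{SL}(n,\H)$-manifolds with the right choice of $\Omega$). Distributing $d$ over $\iota_X(\Omega^n\wedge\bar\Omega^n)$ and using \eqref{eq:Lee} would at best re-express $\delta X^\flat$ in terms of $\theta$ and derivatives of $\iota_X\Omega$, leaving the connection to $\nabla$ unaddressed.
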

\begin{proof}
Let $\nabla^{\mathrm{LC}}$ and $D^b$ be the Levi-Civita and the Bismut connections of $g$, respectively. Then
\[
g(D^b_XY,Z)=g(\nabla^{\mathrm{LC}}_XY,Z)+\frac{1}{2}T(X,Y,Z),
\]
where $T$ is the torsion of the Bismut connection. We also recall the formula in \cite[Proposition 3.1]{IP}:
\[
g(\nabla_XY,Z)=g(D^b_XY,Z)+A(X,Y,Z),
\]
where
\[
A(X,Y,Z):=-\frac{1}{2}\left( T(X,IY,IZ)+T(IX,IY,Z)+T(X,KY,KZ)+T(IX,KY,JZ) \right).
\]
Choose an orthonormal basis $e_1,\dots,e_{4n}$ of $TM$. Then
\[
\mathrm{tr}(\nabla X)=\sum_{i=1}^{4n}g(\nabla_{e_i}X,e_i)=\mathrm{tr}(\nabla^{\mathrm{LC}} X)+\sum_{i=1}^{4n}A(e_i,X,e_i).
\]
Since $\mathrm{tr}(\nabla^{\mathrm{LC}} X)=\sum_{i=1}^{4n} (\nabla^{\mathrm{LC}}_{e_i}X)(e_i)=-\delta X^\flat$, we only need to show that $\sum_{i=1}^{4n}A(e_i,X,e_i)=2\theta(X)$. Using the fact that $T$ is a $3$-form we deduce
\[
T(e_i,IX,Ie_i)+T(Ie_i,IX,e_i)=0, \qquad \text{for all }i=1,\dots,4n
\]
and
\[
\sum_{i=1}^{4n}\left( T(e_i,KX,Ke_i)+T(Ie_i,KX,Je_i) \right)=-2\sum_{i=1}^{4n}T(KX,e_i,Ke_i)=-4\theta(X),
\]
where the last equality can be found, e.g., in \cite{IPap}.
\end{proof}

\begin{prop}\label{Prop:Bochner}
Let $(M,I,J,K,g,\Omega)$ be a HKT manifold. Suppose that $Z\in T^{1,0}_I(M)$ is hyperholomorphic and denote by $\alpha\in \Lambda^{1,0}_I(M)$ the form given by $\alpha=J\left(Z\right)^{\flat}$. Then
\begin{equation}\label{Boc}
\mathcal{L}_Z(\partial^* J \bar \alpha)-\mathcal{L}_{J\bar Z}(\partial^* \alpha)= \left( \rho-J\rho\right)(Z,I\bar Z).
\end{equation}
If furthermore $M$ is compact
\begin{equation}\label{Bochner}
\| \partial^* \alpha \|^2_{L^2}+ \| \partial^* J\bar \alpha \|^2_{L^2}+\int_M \left( \rho-J\rho \right)(Z,I\bar Z) \mathrm{Vol}_g=0.
\end{equation}
\end{prop}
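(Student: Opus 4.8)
The plan is to establish the pointwise identity \eqref{Boc} first, and then to obtain \eqref{Bochner} from it by integration over the compact manifold. The structural inputs are the following. By Proposition \ref{parallel-obata}, a hyperholomorphic $Z$ is parallel for the Obata connection $\nabla$; hence so are $\bar Z$, $JZ$ and $J\bar Z$, since $\nabla$ preserves $I,J,K$ and commutes with complex conjugation. As $\nabla$ is torsion-free, these four vector fields commute pairwise, and for any $1$-form $\beta$ and $\nabla$-parallel $W$ one has $\mathcal L_W\beta=\nabla_W\beta$; moreover $\mathcal L_Z$ and $\mathcal L_{J\bar Z}$ commute with $\partial$, $\bar\partial$, and (by hyperholomorphicity) with $J$, hence with $\partial_J$. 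Exactly as in the proof of Theorem \ref{SL(n,H)-hyperholomorphic} one has $\mathcal L_Z\Omega=\partial\alpha$; and since $J\bar Z\in T^{1,0}_I(M)$ and $\Omega(W)=J(W)^\flat$ for every $W\in T^{1,0}_I(M)$, we also get $\Omega(J\bar Z)=J(J\bar Z)^\flat=J\bar\alpha$, so that $\mathcal L_{J\bar Z}\Omega=\partial(J\bar\alpha)$.

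Next I would express the two functions $\partial^*\alpha$ and $\partial^*(J\bar\alpha)$ in terms of the Lee form. From $\alpha=J(Z)^\flat$ one computes $\alpha^\sharp=JZ$ and $(J\bar\alpha)^\sharp=-\bar Z$, which are $\nabla$-parallel; and since for a $(1,0)$-form $\gamma$ the Dolbeault codifferential $\partial^*\gamma$ coincides with the de Rham codifferential $\delta\gamma$ (because $\bar\partial^*\gamma=0$ for degree reasons), Lemma \ref{delta} — whose $\mathrm{tr}(\nabla\,\cdot)$ term vanishes on parallel fields — gives $\partial^*\alpha=2\theta(JZ)$ and $\partial^*(J\bar\alpha)=-2\theta(\bar Z)$. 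Plugging this in, together with $[Z,\bar Z]=[J\bar Z,JZ]=0$ and $\mathcal L_W=\nabla_W$ on $1$-forms for $\nabla$-parallel $W$, reduces the left-hand side of \eqref{Boc} to $-2(\nabla_Z\theta)(\bar Z)-2(\nabla_{J\bar Z}\theta)(JZ)$.

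It then remains to identify this quantity with $(\rho-J\rho)(Z,I\bar Z)$ — equivalently, to express the Chern--Ricci form of $(g,I)$ through the Obata covariant derivative of the Lee form. For this I would use the comparison $g(\nabla_XY,Z)=g(D^b_XY,Z)+A(X,Y,Z)$ from the proof of Lemma \ref{delta} (so that $\nabla$ differs from the Bismut connection $D^b$ by an explicit torsion term), the formula \eqref{eq:Lee} relating $\theta$ to $\bar\Omega^n$, and the relation implicit in the discussion around \eqref{eq:sCh} between $\partial_J\theta^{1,0}$ and $\tfrac12(\rho-J\rho)$; unravelling types with respect to $I$ and using that $Z,\bar Z,JZ,J\bar Z$ are $\nabla$-parallel — so their covariant derivatives drop out and all the non-metricity of $\nabla$ is carried by the torsion — should reorganise the two terms above into $(\rho-J\rho)(Z,I\bar Z)$. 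I expect this last identification to be the main obstacle: it demands a careful bookkeeping of the interplay between $\nabla$, for which the relevant vector fields are parallel, and the Hermitian data $(g,\Omega,\theta,\rho)$, to which they are not adapted; the type manipulations and the identities \eqref{HKTidentities} should take care of the remaining steps.

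Finally, for compact $M$, I would integrate \eqref{Boc}. For any function $f$ and vector field $W$ one has $\int_M(\mathcal L_Wf)\,\mathrm{Vol}_g=-\int_M f\,(\mathrm{div}_g W)\,\mathrm{Vol}_g$, while Lemma \ref{delta} with $\nabla W=0$ gives $\mathrm{div}_g W=-\delta W^\flat=-2\theta(W)$. Substituting $\partial^*\alpha=2\theta(JZ)$, $\partial^*(J\bar\alpha)=-2\theta(\bar Z)$ and using $\overline{JZ}=J\bar Z$ and $\overline{\theta(\bar Z)}=\theta(Z)$, one finds $\int_M\mathcal L_Z(\partial^*(J\bar\alpha))\,\mathrm{Vol}_g=-\|\partial^*(J\bar\alpha)\|_{L^2}^2$ and $\int_M\mathcal L_{J\bar Z}(\partial^*\alpha)\,\mathrm{Vol}_g=\|\partial^*\alpha\|_{L^2}^2$; integrating \eqref{Boc} then yields precisely \eqref{Bochner}.
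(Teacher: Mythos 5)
Your reduction of the left-hand side of \eqref{Boc} is correct and is in substance the paper's own argument: by Proposition \ref{parallel-obata} the fields $Z,\bar Z,JZ,J\bar Z$ are Obata-parallel, Lemma \ref{delta} (extended $\C$-linearly, with the trace term dropping out) gives $\partial^*\alpha=2\theta(JZ)$ and $\partial^*J\bar\alpha=-2\theta(\bar Z)$, and parallelism plus the fact that $\nabla$ preserves types turns the left-hand side into $-2(\nabla_Z\theta)(\bar Z)-2(\nabla_{J\bar Z}\theta)(JZ)=-2\,\partial\theta^{0,1}(Z,\bar Z)-2\,\partial\theta^{0,1}(J\bar Z,JZ)$, which is exactly the quantity the paper reaches via Cartan's formula. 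Your integration step via $\int_M(\mathcal L_Wf)\,\mathrm{Vol}_g=-\int_Mf\,\mathrm{div}_g(W)\,\mathrm{Vol}_g$ and $\mathrm{div}_g W=-2\theta(W)$ is also correct and arguably cleaner than the paper's integration by parts.

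The genuine gap is the step you yourself flag as ``the main obstacle'': the identification
\[
2\,\partial\theta^{0,1}(Z,\bar Z)+2\,\partial\theta^{0,1}(J\bar Z,JZ)=2\sqrt{-1}\left(\partial\theta^{0,1}-J\partial\theta^{0,1}\right)(Z,I\bar Z)=-\left(\rho-J\rho\right)(Z,I\bar Z).
\]
The first equality is elementary type bookkeeping, but the second — relating $\partial\theta^{0,1}$ to the Chern--Ricci form on an HKT manifold — is the analytic heart of the Bochner formula, and your proposal does not establish it. The tools you list would not suffice as stated: \eqref{eq:sCh} only identifies the \emph{trace} $\Lambda(\partial_J\theta^{1,0})$ with the Chern scalar curvature, not the full form with $(\rho-J\rho)/2$; the Bismut comparison in Lemma \ref{delta} never produces a curvature term; and even \eqref{eq:Lee}, which locally gives $\theta^{0,1}=\bar\partial\log f$ with $|f|^2\propto\det g$, yields $\rho$ as (twice) the \emph{imaginary} part of $\partial\theta^{0,1}$, so an additional HKT-specific identity involving $J$ is still needed to pass to the combination $\rho-J\rho$. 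The paper does not prove this either — it imports it wholesale from \cite[Lemma 4.2]{FG} — so to complete your argument you must either cite that lemma or carry out the computation it contains; until then the proof of \eqref{Boc}, and hence of \eqref{Bochner}, is incomplete.
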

\begin{proof}
Thanks to Lemma \ref{delta} and Proposition \ref{parallel-obata}, we have
\begin{eqnarray*}
\mathcal{L}_Z(\partial^* \bar Z^\flat)+\mathcal{L}_{J\bar Z}(\partial^* (JZ^\flat))&=&2\mathcal{L}_Z(\theta^{0,1}(\bar Z))+2\mathcal{L}_{J\bar Z}(\theta^{0,1}(JZ)),\\
&=&2\mathcal{L}_Z(\theta^{0,1})(\bar Z)+2\mathcal{L}_{J\bar Z}(\theta^{0,1})(JZ),\\
&=&2 \partial (\theta^{0,1})(Z,\bar Z)+2\partial (\theta^{0,1})(J\bar Z,JZ),\\
&=&2 \sqrt{-1} \left( \partial (\theta^{0,1})-J\partial (\theta^{0,1})\right)(Z,I\bar Z),\\
&=&- \left( \rho-J\rho\right)(Z,I\bar Z),
\end{eqnarray*}
where the last identity is as in the proof of \cite[Lemma 4.2]{FG}. In the compact case, integrating this identity we conclude
\begin{eqnarray*}
-\int_M \left( \rho-J\rho \right)(Z,I\bar Z) \mathrm{Vol}_g &=&
\int_M \left( \mathcal{L}_Z(\partial^* \bar Z^\flat)+\mathcal{L}_{J\bar Z}(\partial^* (JZ^\flat)) \right) \mathrm{Vol}_g,\\
&=&\int_M \left( g(\partial \partial^* \bar Z^\flat,\bar Z^\flat)+g(\partial \partial^* (JZ^\flat), JZ^\flat) \right) \mathrm{Vol}_g,\\
&=& \int_M \left( (\partial^* \bar Z^\flat)^2+ (\partial^* (JZ^\flat) )^2 \right) \mathrm{Vol}_g,\\
&=& \| \partial^* J\bar \alpha \|^2_{L^2}+\| \partial^* \alpha \|^2_{L^2},
\end{eqnarray*}
and formula \eqref{Bochner} is proved.
\end{proof}

From Proposition \ref{Prop:Bochner} we recover that in the balanced HKT case $\alpha$ and $J\bar \alpha$ are $\partial^*$-closed furthermore, for HKT--Einstein metrics that are not balanced we infer:

\begin{thm}\label{Thm:Einst}
Let $(M,I,J,K,g,\Omega)$ be a compact HKT--Einstein metric with non-zero Einstein constant $\lambda$. Then there are no non-trivial hyperholomorphic $(1,0)$-vector fields on $(M,I,J,K)$.
\end{thm}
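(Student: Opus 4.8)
The plan is to deduce Theorem~\ref{Thm:Einst} directly from the Bochner-type formula \eqref{Bochner} of Proposition~\ref{Prop:Bochner}, combined with the HKT--Einstein hypothesis. First I would recall that for a HKT--Einstein metric the defining equation $\partial_J(\theta^{1,0})=\lambda\Omega$ should be translated into information about the term $(\rho-J\rho)(Z,I\bar Z)$ appearing on the left-hand side of \eqref{Bochner}. Since the Einstein condition can equivalently be written as $\tfrac{1}{2}(\rho_L-L'\rho_L)=\lambda F_L$ for anticommuting $L,L'\in\{I,J,K\}$, and since $\rho=\rho_I$ here, one expects $(\rho-J\rho)/2$ to be a positive multiple of the fundamental form $F_I$ (or of $\Omega$-related data) times $\lambda$; concretely I would aim to show $(\rho-J\rho)(Z,I\bar Z)=c\lambda\, g(Z,\bar Z)$ for some universal positive constant $c$, so that $\int_M(\rho-J\rho)(Z,I\bar Z)\,\mathrm{Vol}_g = c\lambda\|Z\|_{L^2}^2$.

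With this identification in hand, \eqref{Bochner} becomes
\[
\|\partial^*\alpha\|_{L^2}^2+\|\partial^* J\bar\alpha\|_{L^2}^2 + c\lambda\|Z\|_{L^2}^2 = 0.
\]
If $\lambda>0$, every term is nonnegative, forcing $Z=0$. If $\lambda<0$, I would instead need a second, complementary identity to pin down the sign; the natural move is to apply the same reasoning to the hyperholomorphic vector field $J\bar Z$ (which is hyperholomorphic by Proposition~\ref{parallel-obata}) and to the complex structure $J$ in place of $I$, or simply to observe that by Proposition~\ref{parallel-obata} a hyperholomorphic $Z$ is Obata-parallel, hence $|Z|_g$ is (at least) Obata-parallel, and then extract from \eqref{Boc} a pointwise relation rather than only its integral. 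Running the argument with $-\lambda$ via the conjugate/dual field should flip the sign of the curvature term, and the two inequalities together again give $\|Z\|_{L^2}=0$.

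The main obstacle I anticipate is the curvature bookkeeping in the first step: carefully relating $(\rho-J\rho)(Z,I\bar Z)$ to $\lambda\,g(Z,\bar Z)$ using the HKT--Einstein equation, the q-reality $J\Omega=\bar\Omega$, and the identities \eqref{HKTidentities}, including getting the constant $c$ and its sign exactly right — this is where a sign error would silently break the argument. A secondary point requiring care is the case distinction on the sign of $\lambda$: for $\lambda>0$ the conclusion is immediate from \eqref{Bochner}, but for $\lambda<0$ one must genuinely use that $Z$ and $J\bar Z$ are \emph{both} hyperholomorphic and that the Bochner identity, applied symmetrically, produces two constraints of opposite curvature sign; alternatively, one can invoke \eqref{Boc} pointwise together with Obata-parallelism of $Z$ (so $\partial^*\alpha$ and $\partial^* J\bar\alpha$ are controlled) to force $\lambda|Z|_g^2\equiv 0$ on all of $M$. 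Once the curvature term is correctly evaluated, the remainder is a one-line positivity argument.
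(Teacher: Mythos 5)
Your first step and the $\lambda>0$ case are exactly the paper's argument: the HKT--Einstein equation gives $(\rho-J\rho)(Z,I\bar Z)=2\lambda F_I(Z,I\bar Z)=2\lambda\,g(Z,\bar Z)\ge 0$ pointwise, so \eqref{Bochner} forces $Z=0$ when $\lambda>0$. The genuine gap is your treatment of $\lambda<0$. Neither of your proposed fixes works. Replacing $Z$ by $J\bar Z$ does \emph{not} flip the sign of the curvature term: since $J$ acts on $2$-forms by $\beta\mapsto\beta(J\cdot,J\cdot)$ and $J(\rho-J\rho)=-(\rho-J\rho)$, one computes
\[
(\rho-J\rho)\bigl(J\bar Z,\,I\overline{J\bar Z}\bigr)=-\sqrt{-1}\,(\rho-J\rho)(J\bar Z,JZ)=+\sqrt{-1}\,(\rho-J\rho)(\bar Z,Z)=(\rho-J\rho)(Z,I\bar Z),
\]
so the second constraint is identical to the first, not of opposite sign; likewise the Einstein condition $\tfrac12(\rho_L-L'\rho_L)=\lambda F_L$ holds with the \emph{same} $\lambda$ for every $L\in\{I,J,K\}$, so swapping $I$ for $J$ yields a curvature term of the same sign. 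And the pointwise identity \eqref{Boc} has a left-hand side (a difference of Lie derivatives) that is not sign-definite before integration, so Obata-parallelism of $Z$ does not rescue a pointwise conclusion.

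What actually closes the case is an external input you are missing: on a compact hypercomplex manifold there is no HKT metric of negative constant Chern--scalar curvature (this is the result of Fusi--Gentili \cite{FG} quoted in the Introduction and in the paper's proof). Since an HKT--Einstein metric has constant Chern--scalar curvature $2n\lambda$, the hypothesis $\lambda\neq 0$ automatically forces $\lambda>0$, and then your (and the paper's) one-line positivity argument applies. Without invoking this fact, your proof is incomplete for $\lambda<0$.
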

\begin{proof}
From Proposition \ref{Prop:Bochner} we deduce
\[
0\geq \left( \rho-J\rho \right)(Z,I\bar Z)=\lambda \omega (Z,I\bar Z)=\lambda \|Z\|^2
\]
and since in the compact setting necessarily $\lambda>0$ \cite{FG} we conclude $Z=0$.
\end{proof}




\begin{rmk}
The analogue of Theorem \ref{Thm:Einst} is false for real hyperholomorphic vector fields. Indeed, the Hopf surface (Example \ref{Hopf}) admits a real hyperholomorphic vector field and yet it can be equipped with a compatible HKT--Einstein metric with non-zero Einstein constant as shown in \cite[Section 8.1]{FG}.
\end{rmk}

\end{document}